\newtheorem{theorem}{Theorem}[section]
\newtheorem{corollary}[theorem]{Corollary}
\newtheorem{lemma}[theorem]{Lemma}
\newtheorem{proposition}[theorem]{Proposition}
\newtheorem{remark}[theorem]{Remark}
\theoremstyle{definition}
\newtheorem{definition}[theorem]{Definition}
\numberwithin{figure}{section}
\numberwithin{table}{section}
\newcommand*\acknowledgment[1]{%
	\begingroup\noindent
	\rightskip\leftskip
	\begin{flushleft}\textbf{\large Acknowledgment.}\, #1%
		\par\vspace*{1mm}\end{flushleft}\endgroup}
\begin{document}

\title[Topological Complexities of Finite Digital Images]{Topological Complexities of Finite Digital Images}

\author{MEL\.{I}H \.{I}S and \.{I}SMET KARACA}
\date{\today}

\address{\textsc{Melih Is}
Ege University\\
Faculty of Sciences\\
Department of Mathematics\\
Izmir, Turkey}
\email{melih.is@ege.edu.tr}
\address{\textsc{Ismet Karaca}
Ege University\\
Faculty of Science\\
Department of Mathematics\\
Izmir, Turkey}
\email{ismet.karaca@ege.edu.tr}

\subjclass[2010]{68R01, 55M30, 68T40, 62H35, 65D18}

\keywords{Topological complexity, higher topological complexity, digital topology, homotopy equivalence}

\begin{abstract}
Digital topological methods are often used on computing the topological complexity of digital images. We give new results on the relation between reducibility and digital contractibility in order to determine the topological complexity of a digitally connected finite digital image. We present all possible cases of the topological complexity TC of a finite digital image in $\mathbb{Z}$ and $\mathbb{Z}^{2}$. Finally, we determine the higher topological complexity TC$_{n}$ of finite irreducible digital images independently of the number of points for $n > 1$.
\end{abstract}

\maketitle

\section{Introduction}
\label{intro}

\quad One of the main streams of topological robotics is to apply topological ideas to solve specific problems of engineering and computer science. On the other hand, digital topology has an important place in the studies of computer science. Topological robotics and digital topology have a common field of study and common methods. This raises the question: What results can one get in the subject of robotics by using topological methods on digital images? The answer gets inspired with the study of using discrete structures on computing topological complexity numbers.

\quad Studies of topological robotics start with defining the notion of the topological complexity number of a path-connected topological space by Farber \cite{Farber:2003}. This number is an integer that indicates the complexity of area where the robot moves. Many different methods, especially cohomology, are used in algebraic topology to determine the number exactly (see \cite{Farber:2008} for a collection of the methods used). Contractibility of a topological space is so important if one wants to know the topological complexity number precisely. The topological complexity number of a contractible space is $1$. If a topological complexity number of a topological space is $1$, then the space must be contractible \cite{Farber:2003}. Rudyak \cite{Rudyak:2010} improves the idea of this topological complexity definition and presents the higher topological complexity number of a topological space. He proves that the special version of this new number corresponds to Farber's topological complexity number. Karaca and Is \cite{KaracaIs:2018} defines the digital topological complexity number and the digital higher topological complexity number \cite{MelihKaraca} by moving the study to the field of digital topology. Digital topology is a discrete structure built on digital images at the point, so it assembles topological features without including a topology (see \cite{BorVer:2018,Boxer:1999,Boxer:2005,Boxer:2006,Boxer2:2006,BoxKar:2012,BoxStaecker:2020,Boxer:2020,Chen:2004,ChenRong:2010,ÇınarEgeKaraca:2020,EgeKaraca:2013}, \cite{Khal:1987}, and \cite{MorgenRosen:1981,Ros:1970,Ros:1979} for more information about digital topology, its some applications and digital geometry). This fundamental difference makes some of the topological methods useless in digital topology. For instance, cohomological cup-product method is one of the well-known methods in usual topology to have a new bound for the topological complexity number \cite{Farber:2003}. But it does not work for digital images \cite{MelihKaraca}. At this point, it is sometimes necessary to use new ways that comply with the rules of the digital topology. It is not only a problem of studies of digital topological complexity but also a problem of studies in every aspect of digital topology. As an example, the Euler characteristic is not a homotopy invariant for digital images \cite{EgeKaracaMeltem:2014}. Staecker et al. \cite{HaarMurphyPetStaecker:2015} have a new numerical homotopy invariant for digitally connected digital images and regard their invariant as 'true', which means that it is not an adaptation from topology. They use the notions of reducibility and rigidity. In this paper, we examine a relation between digital contractibility and reducibility (partly rigidity). This leads to us to have a characterization of finite digital images in $\mathbb{Z}$ and $\mathbb{Z}^{2}$ in terms of the topological complexity and the higher topological complexity.

\quad First, we have a simple background of digital setting and recall the definitions of the topological complexity and the higher topological complexity with some important properties. Later, we show that if $X$ is an irreducible digital image, then the topological complexity of the image is greater than $1$. We also demonstrate under what conditions the reducibility requires the digital contractibility. We prove that if $X \subset \mathbb{Z}$ is a digitally connected finite image, then the topological complexity of the image is $1$. After that, we examine the topological complexity of irreducible images having finite number of points. Using this fact, we have the topological complexity number of all digitally connected finite digital images in $\mathbb{Z}^{2}$. We conclude that there is no digitally connected finite image in $\mathbb{Z}$ and $\mathbb{Z}^{2}$ such that the topological complexity of the image is greater than $2$. In Section \ref{sec:2}, we consider the diagonal map on a digital image $X$ and define a new digital fibrational substitute of it. Then we find the digital higher topological complexity number of irreducible images with computing the digital Schwarz genus of the digital fibrational substitute. The topological complexity of the irreducible images is independent from the number of points. At the end of the paper, we state some open problems.

\section{Preliminaries}
\label{sec:1}
\quad This section is planned to provide some backgrounds commonly used in digital topology and topological robotics.

\quad A digital image is the basic element of the digital topology and consists of a set with a relation on this set. More precisely, $(X,\kappa)$ is a \textit{digital image} \cite{Boxer:1999}, where $X$ is a finite subset of $\mathbb{Z}^{n}$ and $\kappa$ is an adjacency relation for the points of $X$. On a digital image, unlike in topological spaces, there is an adjacency relation instead of topology and this relation works as follows: Let $X$ be a finite subset of $\mathbb{Z}^{n}$ and let $k \in \mathbb{Z}$ with $1 \leq k \leq n$. For any distinct elements $x = (x_{1}, ..., x_{n})$, $y = (y_{1}, ..., y_{n}) \in X$, $x$ and $y$ are called \textit{$c_{k}-$adjacent} \cite{Boxer:1999} if we have $|x_{i} - y_{i}| = 1$ for at most $k$ indices $i$, and $|x_{j} - y_{j}| \neq 1$ implies $x_{j} = y_{j}$ for all indices $j$. The notation $x \leftrightarrow_{c_{k}} y$ is used when $x$ is adjacent to $y$. By this construction, we have $c_{1} = 2$ adjacency in $\mathbb{Z}$, $c_{1} = 4$ and $c_{2} = 8$ adjacencies in $\mathbb{Z}^{2}$, and $c_{1} = 6$, $c_{2} = 18$ and $c_{3} = 26$ adjacencies in $\mathbb{Z}^{3}$. Let $(X,\kappa)$ and $(Y,\lambda)$ be any digital images. Let $(x_{1},y_{1})$ and $(x_{2},y_{2})$ be any two points in the cartesian product image $X \times Y$. Then \textit{$(x_{1},y_{1})$ and $(x_{2},y_{2})$ are adjacent in $X \times Y$} \cite{BoxKar:2012} if one of the following conditions holds:
\begin{itemize}
	\item $x_{1} = x_{2}$ and $y_{1} = y_{2}$; or
	
	\item $x_{1} = x_{2}$ and $y_{1} \leftrightarrow_{\lambda} y_{2}$; or
	
	\item $x_{1} \leftrightarrow_{\kappa} x_{2}$ and $y_{1} = y_{2}$; or	
	
	\item $x_{1} \leftrightarrow_{\kappa} x_{2}$ and $y_{1} \leftrightarrow_{\lambda} y_{2}$.
\end{itemize}

\quad Let $(X,\kappa)$ be a digital image in $\mathbb{Z}^{n}$ and let $p$ be any point in $X$. A \textit{$\kappa-$neighbor} \cite{Herman:1993} of $p$ is the point that is $\kappa-$adjacent to $p$. Let $(X,\kappa) \subset \mathbb{Z}^{n}$ be a digital image. $X$ is called \textit{$\kappa-$connected} \cite{Herman:1993} if and only if for every pair of different points $x$, $y \in X$, there is a set $\{x_{0},x_{1}, ...,x_{m}\}$ of points in $X$ such that $x=x_{0}$, $y=x_{m}$ and $x_{i} \leftrightarrow_{\kappa} x_{i+1}$ for $i = 0, 1, ..., m-1$. Let $f : (X_{1},\kappa_{1}) \rightarrow (X_{2},\kappa_{2})$ be a digital map such that $X_{1} \subset \mathbb{Z}^{m_{1}}$ and $X_{2} \subset \mathbb{Z}^{m_{2}}$. Then $f$ is said to be \textit{$(\kappa_{1},\kappa_{2})-$continuous} \cite{Boxer:1999} if $x \leftrightarrow_{\kappa_{1}} x^{'}$ for any different $x$, $x^{'} \in X_{1}$, then $f(x) \leftrightarrow_{\kappa_{2}} f(x^{'})$ in $X_{2}$. In addition, $f$ is \textit{$(\kappa_{1},\kappa_{2})-$isomorphism} \cite{Boxer2:2006} if $f$ is bijective, $(\kappa_{1},\kappa_{2})-$continuous and the inverse $f^{-1}$ is $(\kappa_{2},\kappa_{1})-$continuous.

\quad A set $[a,b]_{\mathbb{Z}} = \{z \in \mathbb{Z} : a \leq z \leq b\}$ is called a \textit{digital interval} \cite{Boxer:2006} from $a$ to $b$. Since the interval is a subset of $\mathbb{Z}$, it has $2-$adjacency. If a digital map $f:[0,m]_{\mathbb{Z}} \rightarrow X$ is  $(2,\kappa)-$continuous with $f(0)=x$ and $f(m)=y$, then $f$ is a \textit{digital path} \cite{Boxer:2006} from $x$ to $y$ in $X$. The digital path $f$ is called a $\kappa-$loop if $f(0) = f(m)$. The product of two digital paths defined in \cite{Khal:1987}: Let $f:[0,m]_{\mathbb{Z}} \rightarrow X$ and $g:[0,n]_{\mathbb{Z}} \rightarrow X$ be  digital $\kappa-$paths with $f(m) = g(0)$. Then \textit{the product of $f$ and $g$} is defined as the map
\[(f \ast g):[0,m+n]_{\mathbb{Z}} \rightarrow X\] 
\begin{displaymath} \hspace*{5.9cm} t \longmapsto (f \ast g)(t) = \begin{cases} f(t), & t \in [0,m]_{\mathbb{Z}} \\ g(t-m), & t \in [m,m+n]. \end{cases} \end{displaymath}

\quad Let $(X,\kappa)$ and $(Y,\lambda)$ be two  digital images, and let $f$, $g:X \rightarrow Y$ be any $(\kappa,\lambda)-$continuous maps. The maps $f$ and $g$ are \textit{$(\kappa,\lambda)-$homotopic} \cite{Boxer:1999} if there exists $m \in \mathbb{Z}$ such that for all $x \in X$, there is a digital map $F : X \times [0,m]_{\mathbb{Z}} \rightarrow Y$ with $F(x,0) = f(x)$ and $F(x,m) = g(x)$, for any fixed $t \in [0,m]_{\mathbb{Z}}$, the digital map $F_{t} : X \rightarrow Y$ is $(\kappa,\lambda)-$continuous and for any fixed $x \in X$, the digital map $F_{x}:[0,m]_{\mathbb{Z}} \rightarrow Y$ is
$(2,\lambda)-$continuous. It is denoted by $f \simeq_{(\kappa,\lambda)} g$ when $f$ is $(\kappa,\lambda)-$homotopic to $g$. We also note that $m$ is the step number of the homotopy in this construction. In other saying, we say that $f$ is digitally homotopic to $g$ in $m$ step.

\quad Let $f : X \rightarrow Y$ be a $(\kappa,\lambda)-$continuous map. Then $f$ is a \textit{$(\kappa,\lambda)-$homotopy equivalence} \cite{Boxer:2005} if there exists a $(\lambda,\kappa)-$continuous map $g : Y \rightarrow X$ for which $g \circ f$ is digitally homotopic to the identity function on $X$ and $f \circ g$ is digitally homotopic to the identity function on $Y$. A digital image $X$ is said to be \textit{$\kappa-$contractible} \cite{Boxer:1999} if the identity map on $X$ is $(\kappa,\kappa)-$homotopic to a constant map $c$ for some $x_{0} \in X$, where the constant map $c : X \longrightarrow X$ is defined by $c(x) = x_{0}$ for all $x \in X$.

\begin{definition}\cite{HaarMurphyPetStaecker:2015}
	Let $(X,\kappa)$ be a finite digital image. If $X$ is $(\kappa,\kappa)-$homotopy equivalent to an image of fewer points, then $X$ is called \textit{reducible}. If $X$ is not reducible, then $X$ is said to be \textit{irreducible}. 
\end{definition} 

\begin{definition}\cite{HaarMurphyPetStaecker:2015}
	Let $(X,\kappa)$ be a finite digital image. If the identity map on $X$ is the only map that is $(\kappa,\kappa)-$homotopic to the identity map on $X$, then $X$ is \textit{rigid}.  
\end{definition}

\quad Let $(X,\kappa)$ be a digital image. If there is an integer $m \geq 4$ for which there exists a $(2,\kappa)-$continuous map $f : [0,m-1]_{\mathbb{Z}} \rightarrow X$ such that the following conditions hold:
\begin{itemize}
	\item $f$ is bijective; 
	
	\item $f(0) \leftrightarrow_{\kappa} f(m-1)$; and
	
	\item for all $t \in [0,m-1]_{\mathbb{Z}}$, the only $\kappa-$neighbors of $f(t)$ in $f([0,m-1]_{\mathbb{Z}})$ are $f((t-1) mod \hspace*{0.2cm} m)$ and $f((t+1) mod \hspace*{0.2cm} m)$,
\end{itemize}
then $X$ is a \textit{digital simple closed $\kappa-$curve} \cite{Boxer:2005}. A simple closed curve with $m$ points is generally denoted by $C_{m}$ and named as an \textit{$m-$gon} or a \textit{digital $m-$cycle}. Let ($X,\kappa)$ be a digital image. An \textit{$m-$loop} \cite{HaarMurphyPetStaecker:2015} is a digitally continuous map from $C_{m}$ to $X$. Moreover, the map $p$ is called a \textit{simple $m-$loop} if $p$ is an injection with $p(c_{i}) \leftrightarrow_{\kappa} p(c_{i+1})$ in $X$ such that there are no other adjacencies between points in the image of $C_{m}$.

\begin{proposition}\cite{HaarMurphyPetStaecker:2015} \label{proposition 5}
	$C_{m}$ is irreducible for $m \geq 5$.
\end{proposition}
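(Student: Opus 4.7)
The plan is to reduce irreducibility of $C_m$ to the statement that every $(\kappa,\kappa)$-continuous self-map of $C_m$ digitally homotopic to the identity is bijective. Granting this, if $C_m$ were $(\kappa,\kappa)$-homotopy equivalent to some $(Y,\lambda)$ with $|Y|<m$, the witnessing maps $f:C_m \to Y$ and $g:Y \to C_m$ would give $g \circ f \simeq_{(\kappa,\kappa)} \mathrm{id}_{C_m}$, so $g \circ f$ would be bijective, forcing $g$ to be surjective and hence $|Y|\geq m$, a contradiction.

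Label $C_m = \{c_0, \ldots, c_{m-1}\}$ (indices mod $m$) with adjacencies $c_i \leftrightarrow_\kappa c_{i+1}$ only. Since any digital homotopy decomposes into single steps, induction on the step number reduces the key statement to showing: if $h': C_m \to C_m$ is $(\kappa,\kappa)$-continuous and $h'(c_i) \in \{c_{i-1}, c_i, c_{i+1}\}$ for every $i$, then $h'$ is bijective. The inductive step from this base is short: a continuous bijection $h_i: C_m \to C_m$ is automatically an automorphism of the cycle graph, so if $h_{i+1}$ is one step from $h_i$, then $h_i^{-1}\circ h_{i+1}$ is one step from $\mathrm{id}_{C_m}$ and the base case applies.

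For the base step, assume for contradiction that some $c_j$ is missed by $h'$. Then $h'(c_j) \in \{c_{j-1}, c_{j+1}\}$; by symmetry take $h'(c_j) = c_{j-1}$. Continuity across the edge $c_j \leftrightarrow_\kappa c_{j+1}$ forces $h'(c_{j+1})$ to be equal or adjacent to $c_{j-1}$, hence in $\{c_{j-2}, c_{j-1}, c_j\}$, while the one-step constraint at $c_{j+1}$ combined with the avoidance of $c_j$ forces $h'(c_{j+1}) \in \{c_{j+1}, c_{j+2}\}$. The main obstacle is precisely the disjointness of these two sets: this holds exactly when the five indices $j-2, j-1, j, j+1, j+2$ are pairwise distinct modulo $m$, i.e. exactly when $m \geq 5$. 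For $m \in \{3,4\}$ the two sets overlap and the argument collapses, in agreement with the known reducibility of $C_3$ and $C_4$, so flagging this five-point obstruction is the crucial point of the write-up.
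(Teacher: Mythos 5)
Your argument is correct and complete. The paper itself gives no proof of this proposition (it is quoted from the cited reference of Haarmann, Murphy, Peters and Staecker), but your route --- reduce to showing that any continuous self-map of $C_m$ one homotopy step away from a bijection is again a bijection, then locate the obstruction in the disjointness of $\{c_{j-2},c_{j-1},c_j\}$ and $\{c_{j+1},c_{j+2}\}$, which is exactly the condition $m\geq 5$ --- is essentially the standard argument from that reference, and all the supporting steps (a continuous bijection of the $m$-cycle is an automorphism, hence conjugation transports ``one step from $h_i$'' to ``one step from the identity'') check out.
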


\begin{definition}\cite{HaarMurphyPetStaecker:2015} \label{definition 1}
	$L_{m}(X)$ is an integer which counts the number of equivalence classes of $m-$loops for any finite digital image $X$.
\end{definition}

\begin{theorem}\cite{HaarMurphyPetStaecker:2015} \label{theorem 3}
	Let $(X,\kappa)$ and $(Y,\lambda)$ be any two digital images such that they are digitally homotopy equivalent. Then for all positive integer $m$, we get $L_{m}(X) = L_{m}(Y)$. 
\end{theorem}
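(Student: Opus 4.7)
The plan is to mimic the classical topological argument: a digital homotopy equivalence between $(X,\kappa)$ and $(Y,\lambda)$ should induce mutually inverse bijections between the homotopy classes of $m$-loops in $X$ and those in $Y$, and $L_m(X)$ is, by Definition \ref{definition 1}, the cardinality of one of these sets.

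First, let $f : X \to Y$ and $g : Y \to X$ realize the digital homotopy equivalence, so that $g \circ f \simeq_{(\kappa,\kappa)} \mathrm{id}_X$ and $f \circ g \simeq_{(\lambda,\lambda)} \mathrm{id}_Y$. I would define $f_{\sharp} : [C_m, X] \to [C_m, Y]$ by $[p] \mapsto [f \circ p]$ and $g_{\sharp} : [C_m, Y] \to [C_m, X]$ by $[q] \mapsto [g \circ q]$, where the brackets denote equivalence classes (digital homotopy classes) of $m$-loops. Since composition of digitally continuous maps is again digitally continuous, $f \circ p$ is a bona fide $m$-loop in $Y$ whenever $p$ is an $m$-loop in $X$.

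Next, I would verify that these assignments descend to homotopy classes. If $p \simeq_{(c_1,\kappa)} p'$ via a homotopy $H : C_m \times [0,n]_{\mathbb{Z}} \to X$, then $f \circ H : C_m \times [0,n]_{\mathbb{Z}} \to Y$ serves as a homotopy from $f \circ p$ to $f \circ p'$: for each fixed $t$ the slice $f \circ H_t$ is continuous (composition), and for each fixed $c \in C_m$ the path $t \mapsto f(H(c,t))$ is $(2,\lambda)$-continuous because $H(c,\cdot)$ is $(2,\kappa)$-continuous and $f$ is $(\kappa,\lambda)$-continuous. Hence $f_{\sharp}$ is well defined, and the same argument gives well-definedness of $g_{\sharp}$.

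Then I would show that $g_{\sharp} \circ f_{\sharp} = \mathrm{id}_{[C_m,X]}$. Given an $m$-loop $p : C_m \to X$, pick a homotopy $K : X \times [0,k]_{\mathbb{Z}} \to X$ witnessing $g \circ f \simeq_{(\kappa,\kappa)} \mathrm{id}_X$. The composite $\widetilde{K}(c,t) := K(p(c),t)$ is then a homotopy from $g \circ f \circ p$ to $p$ in the digital sense: for each fixed $t$, $\widetilde{K}_t = K_t \circ p$ is continuous; for each fixed $c$, $\widetilde{K}(c,\cdot) = K_{p(c)}$ is $(2,\kappa)$-continuous. Thus $[g \circ f \circ p] = [p]$. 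The symmetric argument shows $f_{\sharp} \circ g_{\sharp} = \mathrm{id}_{[C_m,Y]}$, so $f_{\sharp}$ is a bijection and $L_m(X) = L_m(Y)$.

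The only real obstacle is the bookkeeping in the last two paragraphs: one must check that precomposition and postcomposition of a homotopy with a continuous map preserve both the slicewise continuity in the space variable and the $(2,-)$-continuity in the time variable. This is routine in the digital setting because each condition follows from continuity of one factor composed with continuity of the other, but it must be stated carefully because digital homotopy is built from three separate continuity requirements rather than a single one on the product.
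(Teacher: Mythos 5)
Your proof is correct: the standard argument that a digital homotopy equivalence induces mutually inverse bijections $f_{\sharp}$, $g_{\sharp}$ on homotopy classes of $m$-loops goes through verbatim in the digital setting, and you have correctly isolated the only point needing care, namely that pre- and post-composition with a digitally continuous map preserves all three continuity conditions in the definition of digital homotopy. The paper itself gives no proof of this statement (it is quoted from Haarman--Murphy--Peters--Staecker), and your argument is essentially the one in that source, so there is nothing to reconcile.
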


\quad The next three results are the basic facts that we often use in next sections. By using these results, we have an idea about the digital topological complexity of a finite digital image (reducible or irreducible) with respect to the number of points.

\begin{proposition}\cite{HaarMurphyPetStaecker:2015} \label{proposition 2}
	Let $(X,\kappa)$ be a finite digital image. If $X$ has no simple $m-$loop for any $m \geq 4$, then $X$ is digitally homotopy equivalent to a one-point digital image.
\end{proposition}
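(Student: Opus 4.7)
I would prove the proposition by strong induction on the number of points $|X|$, the base case $|X|=1$ being trivial. The key observation for the inductive step is that the hypothesis ``no simple $m$-loop for $m \geq 4$'' is exactly the statement that the $\kappa$-adjacency graph of $X$ has no induced cycle of length $\geq 4$, i.e.\ it is chordal in the graph-theoretic sense. Proposition~\ref{proposition 5} guarantees that any potential obstruction to contractibility would live in this forbidden regime, so the whole strategy is to shave a point off $X$ without changing its digital homotopy type and then appeal to induction.

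Concretely, I would first locate a point $x_{0} \in X$ together with a $\kappa$-neighbor $y_{0}$ such that every $\kappa$-neighbor of $x_{0}$ different from $y_{0}$ is itself $\kappa$-adjacent to $y_{0}$. The existence of such an $x_{0}$ (a \emph{simplicial} vertex, i.e.\ one whose $\kappa$-neighborhood forms a clique) follows from the standard fact that every finite chordal graph has a simplicial vertex; once $x_{0}$ is simplicial, any $y_{0}\in N_{\kappa}(x_{0})$ works. Given this pair, I would define
\[
r : X \longrightarrow X\setminus\{x_{0}\}, \qquad r(x_{0}) = y_{0}, \quad r(z) = z \text{ for } z \neq x_{0},
\]
and check directly that $r$ is $(\kappa,\kappa)$-continuous using the domination property. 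With $\iota : X\setminus\{x_{0}\} \hookrightarrow X$ the inclusion, one has $r\circ \iota = \mathrm{id}_{X\setminus\{x_{0}\}}$, and the one-step homotopy $H : X \times [0,1]_{\mathbb{Z}} \to X$ with $H(\cdot,0) = \mathrm{id}_{X}$, $H(\cdot,1) = \iota\circ r$ is digitally continuous because $x_{0}\leftrightarrow_{\kappa} y_{0}$ and every neighbor of $x_{0}$ is also a neighbor of (or equal to) $y_{0}$. Hence $X$ is $\kappa$-homotopy equivalent to $X\setminus\{x_{0}\}$.

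To close the induction I would verify that $X\setminus\{x_{0}\}$ inherits the hypothesis: any simple $m$-loop in $X\setminus\{x_{0}\}$ composed with $\iota$ would remain a simple $m$-loop in $X$ (inclusions preserve injectivity and preserve both adjacencies and non-adjacencies between points of the subimage), so the absence of simple $m$-loops for $m\geq 4$ persists. The induction hypothesis then gives that $X\setminus\{x_{0}\}$, and therefore $X$, is digitally homotopy equivalent to a one-point image.

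The step I expect to be the main obstacle is the extraction of the dominated pair $(x_{0},y_{0})$: this is the only place where the chordal hypothesis is genuinely used, and it requires translating the digital combinatorics into a bona fide simplicial-vertex statement (for instance by choosing $x_{0}$ as a ``corner'' of a maximum-cardinality or lexicographic BFS ordering on the $\kappa$-graph and showing that any induced non-edge in its neighborhood would close up into a simple $m$-loop with $m\geq 4$). The homotopy-equivalence and inheritance steps are then essentially bookkeeping.
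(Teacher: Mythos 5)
This proposition is quoted from \cite{HaarMurphyPetStaecker:2015} and the present paper supplies no proof of its own, so there is nothing internal to compare your argument against; judged on its own terms, your proof is essentially correct, and its engine is the same one the cited source relies on: delete a point that is dominated by one of its neighbours, and repeat. Your opening observation is right — the image of a simple $m$-loop is an induced $m$-cycle in the $\kappa$-adjacency graph, and conversely an induced $m$-cycle in $X\subset\mathbb{Z}^{n}$ is itself a digital simple closed curve $C_{m}$ and hence carries a simple $m$-loop — so the hypothesis is exactly chordality, and invoking Dirac's simplicial-vertex lemma is a clean, legitimate way to produce the dominated pair $(x_{0},y_{0})$. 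The retraction $r$ and the one-step homotopy are continuous as claimed (indeed even under the stronger product-adjacency notion of continuity, because $N_{\kappa}(x_{0})$ is a clique containing $y_{0}$), and chordality passes to induced subgraphs, so the induction closes.

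Two points you should make explicit. First, the statement as transcribed omits the hypothesis that $X$ is $\kappa$-connected, which is certainly intended (a two-point image with no adjacencies has no simple $m$-loops but is not homotopy equivalent to a point); your argument uses connectedness precisely when you choose $y_{0}\in N_{\kappa}(x_{0})$, since a simplicial vertex of a disconnected graph may be isolated. Second, to apply the induction hypothesis you must also check that $X\setminus\{x_{0}\}$ remains connected; this holds for the same reason the deletion is harmless — any path passing through $x_{0}$ can be shortcut across the clique $N_{\kappa}(x_{0})$ — but it belongs to the inheritance step and should be stated rather than absorbed into ``bookkeeping.''
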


\begin{proposition}\cite{HaarMurphyPetStaecker:2015} \label{proposition 3}
	Let $(X,\kappa)$ be a digitally connected digital image having $m$ points. If $m \leq 4$, then $X$ is digitally homotopy equivalent to a one-point digital image.
\end{proposition}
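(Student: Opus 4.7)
The plan is to reduce everything to Proposition \ref{proposition 2}. Since a simple $m$-loop $p\colon C_m \to X$ is injective on $m$ points, one has $m \leq |X|$ for any such loop in $X$; combined with the constraint $m \geq 4$ in Proposition \ref{proposition 2}, the only potential obstruction to a direct invocation arises when $|X| = 4$ and $X$ carries a simple $4$-loop. In every other situation (i.e.\ $|X| \leq 3$, or $|X| = 4$ without a simple $4$-loop) Proposition \ref{proposition 2} immediately gives the conclusion that $X$ is digitally homotopy equivalent to a one-point image.

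It therefore remains to analyse the case $|X| = 4$ carrying a simple $4$-loop $p : C_4 \to X$. Injectivity of $p$ forces $p$ to be a bijection, and the clause in the definition of a simple $m$-loop requiring no further adjacencies among the image points forces the adjacency relation on $X$ to coincide with that of $C_4$. Consequently $X$ is digitally isomorphic to $C_4$, and it suffices to exhibit a homotopy equivalence from $C_4$ to a one-point image. Labelling $C_4 = \{x_0, x_1, x_2, x_3\}$ with $x_i \leftrightarrow_{\kappa} x_{i+1 \bmod 4}$, I would define $H : C_4 \times [0,1]_{\mathbb{Z}} \to C_4$ by $H(x,0) = x$, $H(x_0,1) = H(x_3,1) = x_0$, and $H(x_1,1) = H(x_2,1) = x_1$. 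A routine check that each time slice is $\kappa$-continuous (each edge of $C_4$ collapses to a vertex or maps to an edge) and that each fixed-point path $H(x,\cdot)$ moves through at most one edge shows that $H$ is a digital homotopy from $\mathrm{id}_{C_4}$ onto the two-point connected subspace $\{x_0, x_1\}$.

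The two-point image $\{x_0, x_1\}$ is then homotopy equivalent to a one-point image via the one-step homotopy $G(x_0, t) = x_0$, $G(x_1, 0) = x_1$, $G(x_1, 1) = x_0$, so transitivity of digital homotopy equivalence closes the case. The main subtlety lies in the construction of $H$ on $C_4$: the symmetric folding across a diameter of the square is essentially forced if one wants every $C_4$-edge to land on an edge or a vertex and every fixed-point slice to move at most one step, so the combinatorial verification is short but must be done explicitly. Once this is in place, the remaining steps — identifying $X$ with $C_4$ and composing the two homotopy equivalences — are formal.
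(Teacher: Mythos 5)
Your proof is correct. One structural point to be aware of: the paper does not prove this proposition at all --- it is imported verbatim from \cite{HaarMurphyPetStaecker:2015} --- so there is no in-paper argument to compare against; what you have written is essentially the argument of the cited source, namely reduce to Proposition \ref{proposition 2} via the observation that a simple $m$-loop is injective (so $m\le |X|$), and then dispose of the single residual case $|X|=4$ with a simple $4$-loop by identifying $X$ with $C_4$ and contracting $C_4$ explicitly. Your case analysis is complete: for $|X|\le 3$ no simple $m$-loop with $m\ge 4$ can exist, and for $|X|=4$ the only possibility is $m=4$, in which case the injection is a bijection and the ``no other adjacencies'' clause forces $X\cong C_4$. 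The folding homotopy $H$ checks out against the paper's (Boxer's) definition of digital homotopy, which only demands that each time-slice $H_t$ and each track $H_x$ be continuous separately; it is worth noting that your $H$ is \emph{not} continuous with respect to the product adjacency on $C_4\times[0,1]_{\mathbb{Z}}$ (e.g.\ $(x_2,0)$ and $(x_3,1)$ are product-adjacent but map to the non-adjacent diagonal pair $x_2,x_0$), so the argument genuinely depends on the separate-continuity convention --- a point you should make explicit rather than leave to the ``routine check.'' The final step, retracting the two-point edge $\{x_0,x_1\}$ to a point and composing homotopy equivalences, is fine.
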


\begin{proposition}\cite{HaarMurphyPetStaecker:2015} \label{proposition 4}
	Let $X$ be a digitally connected digital image having five points. Then $X$ is digitally homotopy equivalent to a one-point digital image or to $C_{5}$.
\end{proposition}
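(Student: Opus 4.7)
The plan is to use Proposition \ref{proposition 2} as the entry point and then perform a short case analysis driven by the length of the shortest simple loop that $X$ can contain. First, if $X$ has no simple $m$-loop for any $m \geq 4$, Proposition \ref{proposition 2} immediately yields that $X$ is digitally homotopy equivalent to a one-point digital image, giving the first alternative. Otherwise $X$ admits some simple $m$-loop, and since $|X| = 5$ the injectivity condition in the definition of a simple $m$-loop forces $m \in \{4, 5\}$.

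Next I would dispatch the case $m = 5$ directly. An injection $C_5 \to X$ into a five-point image is automatically bijective, and the clause "no other adjacencies between points in the image of $C_m$" in the definition of a simple $m$-loop says that the only $\kappa$-adjacencies in $X$ are the cyclic ones inherited from $C_5$. Hence $X$ is $(\kappa,c_1)$-isomorphic to $C_5$, which is the second alternative and in particular makes $X$ digitally homotopy equivalent to $C_5$.

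Finally, I would treat the case where $X$ contains a simple 4-loop on points $\{x_1,x_2,x_3,x_4\}$ but no simple 5-loop. Let $x_5$ be the remaining point; by $\kappa$-connectedness $x_5$ is $\kappa$-adjacent to some $x_i$. The idea is to construct a digital retraction $r\colon X \to X \setminus \{x_5\}$ that collapses $x_5$ onto a suitable $x_i$ and to verify it is a $(\kappa,\kappa)$-homotopy equivalence using the one-step homotopy $H(x_5,0)=x_5$, $H(x_5,1)=x_i$. Once this is done, Proposition \ref{proposition 3} applied to the four-point $\kappa$-connected image $X \setminus \{x_5\}$ concludes that $X$ is digitally homotopy equivalent to a one-point digital image.

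The principal obstacle lies in the last step: the naive fold is only continuous when every $\kappa$-neighbor of $x_5$ either equals $x_i$ or is itself $\kappa$-adjacent to $x_i$. When $x_5$ has several neighbors on the 4-loop one must either pick $x_i$ to dominate all of them or argue that the problematic adjacency patterns cannot occur. To handle this, I would enumerate, up to digital isomorphism and subject to the geometric constraints imposed by the $c_k$-adjacencies in $\mathbb{Z}^n$, the finitely many ways a fifth point can be attached to a simple 4-loop without producing a simple 5-loop, and verify the reduction in each case. The absence of a simple 5-loop is precisely what rules out the configurations where no dominating neighbor exists, so the case list should always terminate with a valid fold.
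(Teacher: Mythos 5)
The paper does not prove this statement itself (it is quoted from Haarman--Murphy--Peters--Staecker), so your proposal can only be judged on its own terms. The first two cases are fine: Proposition \ref{proposition 2} handles the loop-free case, and a simple $5$-loop in a five-point image is indeed a digital isomorphism onto $C_{5}$. The gap is in the final case, and it is exactly where you located the ``principal obstacle'': your claim that \emph{the absence of a simple $5$-loop rules out the configurations where no dominating neighbor exists} is false. Take a simple $4$-loop $x_{1}\leftrightarrow x_{2}\leftrightarrow x_{3}\leftrightarrow x_{4}\leftrightarrow x_{1}$ and attach $x_{5}$ so that it is adjacent to precisely the antipodal pair $x_{1},x_{3}$; the adjacency graph is $K_{2,3}$. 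This is realizable, e.g.\ in $(\mathbb{Z}^{3},c_{3}=26)$ with $x_{1}=(0,0,0)$, $x_{2}=(1,-1,-1)$, $x_{3}=(2,0,0)$, $x_{4}=(1,-1,1)$, $x_{5}=(1,1,-1)$. It contains no simple $5$-loop ($K_{2,3}$ has no Hamiltonian cycle), yet \emph{every} point of it has exactly two neighbors which are mutually non-adjacent, so no single-point fold $r(x_{i})=x_{j}$ is continuous anywhere in the image. Your case list would therefore terminate at a configuration with no valid fold, and the enumeration as described cannot close.

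The configuration is still contractible, but establishing that requires a homotopy that moves several points simultaneously rather than collapsing one point onto a neighbor. For instance, with the bipartition $\{x_{1},x_{3}\}\cup\{x_{2},x_{4},x_{5}\}$, the map $f$ defined by $f(x_{1})=x_{2}$, $f(x_{3})=x_{2}$, $f(x_{2})=x_{1}$, $f(x_{4})=x_{3}$, $f(x_{5})=x_{3}$ is digitally continuous and one-step homotopic to the identity (each point is moved to a neighbor), and $f^{2}$ retracts $X$ onto the contractible two-point image $\{x_{1},x_{2}\}$. A similar, milder repair is needed for the wheel configuration ($x_{5}$ adjacent to all four loop points, e.g.\ the filled diamond in $(\mathbb{Z}^{2},8)$): folding $x_{5}$ fails, but folding a rim point onto $x_{5}$ succeeds. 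So the architecture of your argument is salvageable, but the last step needs either these explicit non-fold reductions or a different reduction criterion; as written, the justification offered for why the fold always exists does not hold.
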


\quad Let $PX$ be a set of all digitally continuous digital paths for any $\kappa-$connected digital image $(X,\kappa)$. Let $s : X \times X \rightarrow PX$ be the digital map which takes any pair $(a,b)$ of a digital image to a digital path starting at $a$ and ending at $b$, is denoted by the digital version of motion planning algorithm. In \cite{KaracaIs:2018}, there is a reasoned way to define the continuity of motion planning algorithm. The digital connectedness on $PX$ is defined as follows: let $\tau$ be an adjacency relation on $PX$, and let $\alpha$ and $\beta$ be any digital paths on $X$. If $\alpha$ and $\beta$ are $\tau-$connected for all $t \in [0,m]_{\mathbb{Z}}$, then $\alpha \leftrightarrow_{\kappa} \beta$. $\alpha$ and $\beta$ can have different steps in their way. For instance, when $\alpha$ has $5$ steps and $\beta$ has $2$ steps, the last step of $\beta$ repeats itself $3$ times. Then both $\alpha$ and $\beta$ have the same number of steps, which means there is no confusion about the adjacency of digital paths. See \cite{KaracaIs:2018} for more detail and example about continuity of digital motion planning algorithm. Moreover, $\pi : PX \rightarrow X \times X$ is a digital map, which takes any digital path $\alpha$ to the pair $(\alpha(0),\alpha(m))$, where $\alpha(m)$ is the final step of $\alpha$. Finally, we are ready to give the definition:

\begin{definition}\cite{KaracaIs:2018}
	The \textit{digital topological complexity} TC$(X,\kappa)$ is the minimal number $k$ such that $$X \times X = U_{1} \cup U_{2} \cup ... \cup U_{k}$$ with the property that there exists a digitally continuous motion planning algorithm $s_{j} : U_{j} \rightarrow PX$, $j = 1, 2, ..., k$, for which $\pi \circ s_{j}$ is identity map over each $U_{j} \subset X \times X$. If no such $k$ exists, then TC$(X,\kappa) = \infty$.
\end{definition}

\quad We compute the digital topological complexity of only connected digital images (recall that in ordinary topology, only path-connected topological spaces are considered for the computation of the topological complexity). The next proposition is quite important such as the fact that the topological complexity is a homotopy invariant.

\begin{proposition}\cite{KaracaIs:2018} \label{proposition 1}
	TC$(X,\kappa) = 1$ if and only if $(X,\kappa)$ is $\kappa-$ contractible.
\end{proposition}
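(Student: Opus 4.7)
The plan is to prove the biconditional in two directions, each constructive: from a $\kappa$-contraction build a global motion planning algorithm, and conversely from such a global algorithm extract a $\kappa$-contraction.

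For the forward direction, I would suppose $X$ is $\kappa$-contractible, so there is a digital homotopy $H : X \times [0,m]_{\mathbb{Z}} \to X$ with $H(x,0)=x$ and $H(x,m)=x_{0}$ for some fixed $x_{0} \in X$. For each $x \in X$ the slice $H_{x}(t) = H(x,t)$ is a digital path from $x$ to $x_{0}$. Define $s : X \times X \to PX$ by $s(a,b) = H_{a} \ast \overline{H_{b}}$, where $\overline{H_{b}}(t) = H_{b}(m-t)$ is the reverse path. By construction $(\pi \circ s)(a,b) = (a,b)$, so it suffices to check that $s$ is digitally continuous with respect to the adjacency on $PX$ described before the definition of TC. If $(a,b) \leftrightarrow (a',b')$ in $X \times X$, then for every $t$ the continuity of $H$ yields $H_{a}(t) \leftrightarrow_{\kappa} H_{a'}(t)$ (or equality), and analogously for $b,b'$; concatenation preserves this adjacency pointwise, so $s(a,b) \leftrightarrow s(a',b')$ in $PX$. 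Hence a single set $U_{1} = X \times X$ suffices, giving TC$(X,\kappa) = 1$.

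For the reverse direction, suppose TC$(X,\kappa) = 1$, so that a digitally continuous section $s : X \times X \to PX$ with $\pi \circ s = \mathrm{id}$ exists. Fix any $x_{0} \in X$ and set $H(x,t) = s(x,x_{0})(t)$. Because $X$ is finite, only finitely many paths actually appear, and padding each with its endpoint lets me assume they share a common step count $m$ without breaking continuity. Then $H(x,0) = x$ and $H(x,m) = x_{0}$. Continuity in $x$ follows from continuity of $s$: if $x \leftrightarrow_{\kappa} x'$ then $(x,x_{0}) \leftrightarrow (x',x_{0})$, so $s(x,x_{0})$ and $s(x',x_{0})$ are adjacent in $PX$, which by the definition of that adjacency means $H(x,t)$ and $H(x',t)$ are $\kappa$-adjacent or equal for every fixed $t$. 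Continuity in $t$ for fixed $x$ is automatic since $s(x,x_{0})$ is already a $(2,\kappa)$-continuous digital path. Therefore $H$ witnesses $\kappa$-contractibility of $X$ to $x_{0}$.

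The main obstacle is the bookkeeping around path lengths: concatenation doubles the step count, reversal preserves it, and the paths $s(a,b)$ produced at different inputs may a priori have different lengths. Both directions therefore depend on making explicit that padding by an endpoint is compatible with the adjacency on $PX$ used in the TC definition. Once this is handled, the two constructions are essentially inverse and the remaining work reduces to a routine verification of the adjacency conditions recalled in the preliminaries.
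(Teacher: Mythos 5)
Your proposal is correct and follows the same standard argument as the cited source \cite{KaracaIs:2018} (the paper itself only quotes this result): a contraction $H$ yields the global section $s(a,b)=H_{a}\ast\overline{H_{b}}$, and conversely restricting a global section to $X\times\{x_{0}\}$ recovers a contraction. The continuity checks and the endpoint-padding convention you invoke are exactly those built into the adjacency on $PX$ recalled in the preliminaries, so no gap remains.
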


\begin{definition}\cite{MelihKaraca}
	Let $f:(X,\kappa) \rightarrow (Y,\lambda)$ be a map in digital images with digitally connected spaces $(X,\kappa)$ and $(Y,\lambda)$. A digital fibrational substitute of $f$ is defined as a digital fibration $\widehat{f}:(Z,\kappa_{3}) \longrightarrow (Y,\kappa_{2})$ such that there exists a commutative diagram
	
	\begin{displaymath}
	\xymatrix{
		X \ar[r]^{h} \ar[d]_f &
		Z \ar@{.>}[d]^{\widehat{f}} \\
		Y \ar@{=}[r]_{1_{Y}} & Y, }
	\end{displaymath}
	
	where $h$ is a digital homotopy equivalence.
\end{definition}

\quad Let $p : X \rightarrow Y$ be a digital fibration. \textit{The digital Schwarz genus} \cite{MelihKaraca} of $p$ is defined as the minimum number $k$ such that $X = U_{1} \cup U_{2} \cup ... \cup U_{k}$ with the property that for all $1 \leq i \leq k$, there is a digitally continuous map $s_{i} : U_{i} \rightarrow X$ that satisfies $p \circ s_{i} = id_{U_{i}}$. If we do not have a digital fibration, then we regard the digital Schwarz genus of a map as the digital Schwarz genus of its digital fibrational substitute. Consequently, we now give another important definition:

\begin{definition}\cite{MelihKaraca}
	Let $X$ be any $\kappa$-connected digital image. Let $J_{n}$ be the wedge of $n-$digital intervals $[0,m_{1}]_{\mathbb{Z}}, ... , [0,m_{n}]_{\mathbb{Z}}$ for a positive integer $n$, where $0_{i} \in [0,m_{i}]$, $i = 1, ... ,n$, are identified. Then \textit{the digital higher topological complexity} TC$_{n}(X,\kappa)$ is defined by the digital Schwarz genus of the digital fibration $$e_{n}:X^{J_{n}} \rightarrow X^{n}$$
	\hspace*{5.7cm} $f \longmapsto (f(m_{1})_{1},...,f(m_{n})_{n})$,\\
	where $(m_{i})_{k}$, $k=1, ..., n$ denotes the endpoints of the $i-$th interval for each $i$.
\end{definition}

\quad In the definition of the higher topological complexity in digital images, we have TC$_{2} =$ TC \cite{MelihKaraca}. Furthermore, TC$_{n}$ is also a homotopy invariant for digital images just as TC.

\section{Digital Topological Complexity in $\mathbb{Z}$ and $\mathbb{Z}^{2}$}
\label{sec:2}
\quad We begin with discussing the relation between the contractibility and the reducibility on digitally connected digital images. It is clear that if $(X,\kappa)$ is a $\kappa-$connected and $\kappa-$contractible finite digital image, then $X$ is reducible. The converse need not to be true. For example, consider the following digital image $X$ with $8-$adjacency and its digital homotopy equivalence in Figure \ref{fig1:figure1}:
\begin{figure*}[h]
	\centering
	\includegraphics[width=0.80\textwidth]{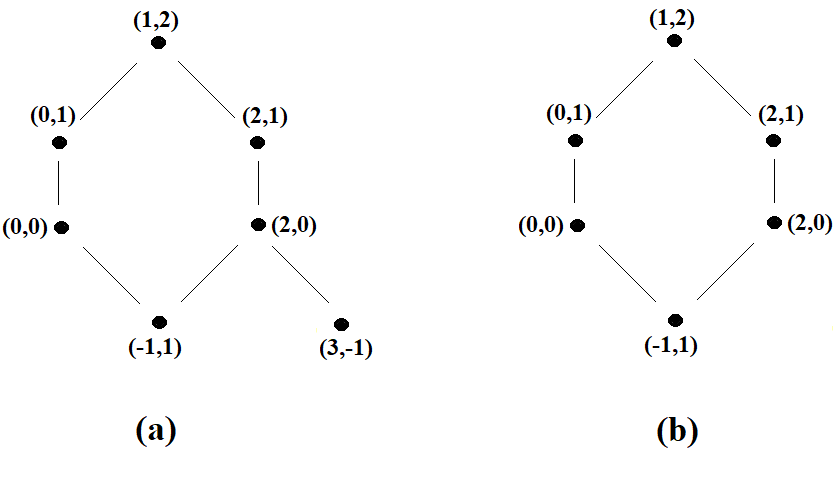}
	\caption{The digital image $X$ with $8-$adjacency is on the left (a) and its digital homotopy equivalence $X \setminus \{(3,-1)\}$ on the right (b).}
	\label{fig1:figure1}
\end{figure*}

The digital image $X$ is reducible because it is digitally homotopy equivalent to the image $X \setminus \{(3,-1)\}$ (Figure \ref{fig1:figure1} (b)) but it is well-known that $X$ is not $8-$contractible. Combining this result with Proposition \ref{proposition 1}, we have that the topological complexity number of a reducible image can be different from $1$. Indeed, we obtain that 
\begin{eqnarray*}
	\text{TC}(X,8) = \text{TC}(X \setminus \{(3,-1)\},8) = 2 
\end{eqnarray*} (see [Example 3.5, \cite{KaracaIs:2018}]). 
In addition, if for any digitally connected finite image $(X,\kappa)$ having more than one point, TC$(X,\kappa) = 1$, then $X$ must be reducible. So, we immediately have the result:

\begin{proposition} \label{corollary 1}
	Let $(X,\kappa)$ be a digitally connected finite image having more than one point. If $X$ is irreducible, then TC$(X,\kappa) > 1$. 
\end{proposition}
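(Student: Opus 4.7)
The plan is to prove the contrapositive: if $\mathrm{TC}(X,\kappa) = 1$, then $X$ is reducible. Given the tools already developed, this should be a short argument built on Proposition \ref{proposition 1}.

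First, I would invoke Proposition \ref{proposition 1} to translate the hypothesis $\mathrm{TC}(X,\kappa) = 1$ into the statement that $X$ is $\kappa$-contractible. By definition, this means there exists a point $x_{0} \in X$ such that the identity map $\mathrm{id}_{X}$ is $(\kappa,\kappa)$-homotopic to the constant map $c_{x_{0}} : X \to X$ sending every point to $x_{0}$.

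Next, I would exhibit an explicit digital homotopy equivalence between $X$ and the one-point image $\{x_{0}\}$. Let $i : \{x_{0}\} \hookrightarrow X$ denote the inclusion and $r : X \to \{x_{0}\}$ the unique (trivially continuous) map. Then $r \circ i = \mathrm{id}_{\{x_{0}\}}$, while $i \circ r = c_{x_{0}} \simeq_{(\kappa,\kappa)} \mathrm{id}_{X}$ by contractibility. Hence $X$ is digitally homotopy equivalent to a single point. Since $X$ has more than one point by hypothesis, this is an image of strictly fewer points, so $X$ is reducible.

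The contrapositive is now established: reducibility is forced by $\mathrm{TC}(X,\kappa) = 1$. Taking the contrapositive, if $X$ is irreducible, then $\mathrm{TC}(X,\kappa) \neq 1$, and since the topological complexity is always a positive integer, we conclude $\mathrm{TC}(X,\kappa) > 1$. I do not expect any real obstacle here; the only subtlety to check carefully is that the equivalence between contractibility and homotopy equivalence with a one-point image behaves in the digital setting exactly as one expects, which is immediate from the digital definitions given in the Preliminaries section.
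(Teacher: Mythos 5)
Your proposal is correct and follows essentially the same route as the paper: the paper derives the proposition immediately from the observation that $\mathrm{TC}(X,\kappa)=1$ forces $\kappa$-contractibility (Proposition \ref{proposition 1}), hence homotopy equivalence with a one-point image, hence reducibility since $X$ has more than one point. Your write-up simply makes the inclusion/retraction argument explicit, which the paper leaves implicit.
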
  

\quad We note that Proposition \ref{corollary 1} is still true if we choose $X$ as a rigid digital image instead of an irreducible digital image. We express that the digital contractibility implies the reducibility. The next Lemma shows that the converse of this expression is valid. 

\begin{lemma} \label{lemma 2}
	Let $X$ be a digital image with $m \geq 4$ points. 
	
	\textbf{a)} If $C_{m}$ is an empty set, then $X$ is digitally contractible if and only if $X$ is reducible. 
	
	\textbf{b)} If $C_{m}$ is nonempty and $X$ is not digitally homotopy equivalent to $C_{m}$, then $X$ is digitally contractible if and only if $X$ is reducible.
\end{lemma}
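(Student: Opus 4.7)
For both parts (a) and (b), the forward implication is immediate: if $X$ is $\kappa$-contractible, then the identity on $X$ is $(\kappa,\kappa)$-homotopic to a constant map, so $X$ is digitally homotopy equivalent to a one-point image. Since $m\geq 4>1$, this witnesses a strict reduction in the number of points, so $X$ is reducible.

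For the converse in part (a), my plan is to combine the hypothesis with Proposition \ref{proposition 2} via iterated reduction. If $X$ is reducible then there exists a digital homotopy equivalence $X \simeq X_{1}$ with $|X_{1}|<m$; iterating yields a chain of reductions ending at an irreducible representative $Y$ of the homotopy class of $X$. Propositions \ref{proposition 3} and \ref{proposition 4} cover the case $|Y|\leq 5$: either $Y$ is a one-point image, giving contractibility of $X$, or $Y=C_{5}$, and the latter possibility is excluded by transporting the vanishing of $C_{m}$ through the homotopy invariance of the loop counts $L_{k}$ from Theorem \ref{theorem 3}. The case $|Y|\geq 6$ is handled by invoking Proposition \ref{proposition 5} to observe that any cycle $C_{k}$ is irreducible, so the only way to make the chain of reductions compatible with the emptiness of $C_{m}$ in $X$ is for $Y$ to be a one-point image.

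For the converse in part (b), the same iteration produces an irreducible representative $Y$ of $[X]$. The hypothesis $X\not\simeq C_{m}$, combined with Theorem \ref{theorem 3} and Proposition \ref{proposition 5}, excludes $Y$ being a cycle $C_{k}$ for any $k\geq 5$, since such a $Y$ would force $X \simeq C_{k}$, contradicting the assumption after matching point counts via homotopy invariance. Propositions \ref{proposition 3} and \ref{proposition 4} eliminate small non-cyclic irreducible possibilities, leaving only the one-point image; hence $X$ is contractible.

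The main obstacle I anticipate is cleanly treating irreducible representatives with six or more points, since Propositions \ref{proposition 3}--\ref{proposition 5} only directly classify small irreducible images. Ruling out exotic irreducible representatives at larger point counts will likely require a structural argument — using Proposition \ref{proposition 2} together with Theorem \ref{theorem 3} to transport loop data from $X$ back to its reduced representative $Y$ — so that the hypotheses of (a) or (b) force $Y$ either to possess no simple $k$-loops (whence $Y$ is a one-point image by Proposition \ref{proposition 2}) or to be an excluded cycle, completing the reduction-to-contractibility step.
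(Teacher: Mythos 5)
Your forward implication (contractible $\Rightarrow$ reducible, since $m\geq 4>1$) agrees with what the paper takes for granted. For the converse, however, your route is genuinely different from the paper's and contains a gap that you flag yourself but do not close. You iterate reductions down to an irreducible representative $Y$ of the homotopy class of $X$ and then try to classify $Y$. Propositions \ref{proposition 3} and \ref{proposition 4} only classify connected images with at most $5$ points, and Proposition \ref{proposition 5} asserts that cycles $C_{k}$ ($k\geq 5$) are irreducible --- it does \emph{not} assert that every irreducible image is a cycle or a point. Irreducible images on six or more points that are neither (rigid images, wedges of cycles, and the like) are not excluded by anything cited in the paper, so the step ``leaving only the one-point image'' for $|Y|\geq 6$ is unsupported; transporting the loop counts $L_{k}$ via Theorem \ref{theorem 3} does not by itself rule such $Y$ out, because you have no computation of $L_{k}$ for an unknown irreducible image.

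The paper avoids any classification of large irreducible images by arguing directly on $X$: assume $X$ is reducible but \emph{not} homotopy equivalent to a point; the contrapositive of Proposition \ref{proposition 2} then yields a simple $m$-loop, i.e.\ a digitally continuous injection $p:C_{m}\rightarrow X$. In case (a) this immediately contradicts $C_{m}=\emptyset$. In case (b), since $C_{m}$ and $X$ both have $m$ points, $p$ is a bijection whose inverse $q=p^{-1}$ is digitally continuous (adjacent points in the image of the loop pull back to consecutive $c_{i}$'s), so $X$ is isomorphic, hence homotopy equivalent, to $C_{m}$, contradicting the hypothesis; therefore $X$ must be contractible. To repair your argument you would need either to adopt this direct use of Proposition \ref{proposition 2} at the level of $X$ itself, or to supply the missing structural fact that an irreducible, non-contractible image must contain (indeed, essentially be) a simple closed curve --- a statement the cited results do not provide.
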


\begin{proof}
	\textbf{a)} It is enough to prove that if $X$ is reducible then $X$ is digitally contractible. Let $X$ be a reducible digital image. Then $X$ is digitally homotopy equivalent to an image $X \setminus A$, where $A$ has fewer points than $X$. Let $\ast$ be any point of $X$. If $X$ is digitally homotopy equivalent to the one-point image $\{\ast\}$, then there is nothing to prove. Assume that $X$ is not digitally homotopy equivalent to the one-point image. By Proposition \ref{proposition 2}, we have that $X$ has a simple $m-$loop for any $m \geq 4$. Therefore, there exists a digitally continuous injection $p : C_{m} \rightarrow X$. This is a contradiction because $p$ cannot be an injection. Whereas $X$ has $m$ points, $C_{m}$ is empty for any $m \geq 4$. As a conclusion, $X$ is digitally contractible.
	
	\textbf{b)} Let $X$ be a reducible digital image. Assume that $X$ is not digitally homotopy equivalent to the one point image. Then we have a digitally continuous injection $p : C_{m} \rightarrow X$. The cardinality of $C_{m}$ and $m$ and the cardinality of $X$ is the same. This implies that $p$ is surjective. Therefore, $p$ is a bijection. If we define $q : X \rightarrow C_{m}$ with $q(x) = p^{-1}(x)$, then $q$ is digitally continuous. Indeed, for any $x_{i} \in X$, $i = 1, ..., m$, we find $p^{-1}(x_{i}) \leftrightarrow p^{-1}(x_{i+1})$ because $p^{-1}(x_{i}) = c_{i}$ and $p^{-1}(x_{i+1}) = c_{i+1}$. Hence, we get $p \circ q = id_{X}$ and $q \circ p = id_{X}$. This means that $X$ is digitally homotopy equivalent to $C_{m}$ which is a contradiction. Finally, $X$ is digitally homotopy equivalent to the one point image, i.e. $X$ is digitally contractible. 
\end{proof}

\begin{lemma}\label{lemma 1}
	A digitally connected image $X \subset \mathbb{Z}$ is $2-$contractible if and only if $L_{1}(X) = 1$.
\end{lemma}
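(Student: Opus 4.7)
The plan is to prove both directions by combining Theorem \ref{theorem 3} (homotopy invariance of $L_{m}$) with the fact that any digitally connected subset of $\mathbb{Z}$ under $2$-adjacency must be a digital interval $[a,b]_{\mathbb{Z}}$.

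For the forward direction, I would assume $X$ is $2$-contractible, so $X$ is $(2,2)$-homotopy equivalent to a one-point image $\{x_{0}\}$. By Theorem \ref{theorem 3}, $L_{1}(X) = L_{1}(\{x_{0}\})$. Any digitally continuous map into a one-point image is the unique constant map, so there is a single equivalence class of $1$-loops in $\{x_{0}\}$, giving $L_{1}(\{x_{0}\}) = 1$ and therefore $L_{1}(X) = 1$.

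For the reverse direction, I would first observe that $X$, being digitally connected in $\mathbb{Z}$, must equal the interval $[a,b]_{\mathbb{Z}}$ where $a = \min X$ and $b = \max X$: any missing integer strictly between $a$ and $b$ would split $X$ into two $2$-disjoint pieces, contradicting connectedness. Since each point in $\mathbb{Z}$ has at most two $2$-neighbors, $X = [a,b]_{\mathbb{Z}}$ admits no simple $m$-loop for any $m \geq 4$, as a simple loop would require a third neighbor at some vertex. Proposition \ref{proposition 2} then implies that $X$ is digitally homotopy equivalent to a one-point image, i.e., $X$ is $2$-contractible. An equivalent concrete verification is the sliding homotopy that sends each point to its left neighbor repeatedly until the image collapses to $\{a\}$.

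The main conceptual obstacle is that the hypothesis $L_{1}(X) = 1$ is not strictly doing work in the reverse implication, because every digitally connected $X \subset \mathbb{Z}$ is automatically an interval and hence automatically $2$-contractible. I would therefore frame the reverse direction as a direct structural argument, so the lemma in practice records that both conditions hold simultaneously for every digitally connected $X \subset \mathbb{Z}$; this packaging is what will be useful when comparing with the richer $\mathbb{Z}^{2}$ setting later in the paper.
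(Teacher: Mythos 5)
Your forward direction is identical to the paper's: contractibility gives a homotopy equivalence with a one-point image, and Theorem \ref{theorem 3} transports $L_{1}$ across it. Where you genuinely differ is the converse. The paper's converse is a one-line assertion (``$L_{1}(X)=1$ \dots\ means that $X$ is $2$-contractible'') with no supporting argument --- and, as the paper itself observes immediately after the lemma, that bare implication fails in $\mathbb{Z}^{2}$, so the missing ingredient is precisely the structure of connected subsets of $\mathbb{Z}$. You supply that ingredient: a digitally connected finite $X \subset \mathbb{Z}$ is an interval $[a,b]_{\mathbb{Z}}$, it carries no simple $m$-loop for $m \geq 4$, and Proposition \ref{proposition 2} then yields contractibility. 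Your further observation that the hypothesis $L_{1}(X)=1$ does no work in this direction is accurate (a $1$-loop is essentially a point, so $L_{1}=1$ for every connected image); the lemma's real content is that every digitally connected finite subset of $\mathbb{Z}$ is $2$-contractible, and your write-up makes that explicit where the paper leaves it implicit.

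One small repair is needed in your justification that $[a,b]_{\mathbb{Z}}$ has no simple $m$-loop. The reason you give --- ``a simple loop would require a third neighbor at some vertex'' --- is not correct: every vertex of $C_{m}$ has exactly two neighbors, and every interior point of $[a,b]_{\mathbb{Z}}$ has exactly two $2$-neighbors, so there is no degree obstruction. The correct obstruction is that the adjacency graph of any subset of $\mathbb{Z}$ is acyclic: if $p : C_{m} \rightarrow [a,b]_{\mathbb{Z}}$ were a simple $m$-loop, then at the maximal point $p(c_{j})$ of its image both cyclic neighbors $p(c_{j-1})$ and $p(c_{j+1})$ would have to equal $p(c_{j})-1$, contradicting injectivity for $m \geq 3$. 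Alternatively, your sliding homotopy $x \mapsto \max(x-1,a)$, iterated $b-a$ times, is a perfectly good direct verification of contractibility and sidesteps the loop argument entirely.
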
 

\begin{proof}
	Let $X \subset \mathbb{Z}$ be a $2-$contractible image. Then $X$ is digitally homotopy equivalent to the one-point digital image $\{*\}$. We observe that the one-point is the unique irreducible image in $\mathbb{Z}$. By Theorem \ref{theorem 3}, $L_{1}(X) = L_{1}(\{*\}) = 1$. Conversely, if $L_{1}(X) = 1$, then we have that the number of equivalence classes of $1-$loops is $1$. This means that $X$ is $2-$contractible.
\end{proof}

\quad From the digital image $X$ in Figure \ref{fig1:figure1} (a), we cannot generalize Lemma \ref{lemma 1} in $\mathbb{Z}^{n}$ for $n > 1$. Since $X$ is $8-$connected, $L_{1}(X) = 1$. However, $X$ is not $8-$contractible. The following Corollary is a result of Lemma \ref{lemma 1} and Proposition \ref{proposition 1}.

\begin{corollary}
	Let $X \subset \mathbb{Z}$ be a digitally connected finite image. Then we get TC$(X,2) = 1$.
\end{corollary}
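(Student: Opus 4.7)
The plan is to chain the two results that the authors flag: establish $2$-contractibility of $X$, then invoke Proposition \ref{proposition 1} to conclude TC$(X,2) = 1$. Lemma \ref{lemma 1} tells me that $2$-contractibility is equivalent to $L_{1}(X) = 1$, so the entire task reduces to verifying this single identity.

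First I would exploit the fact that only the $c_{1} = 2$ adjacency is available in $\mathbb{Z}$: two distinct integers are $2$-adjacent exactly when they differ by $1$. A digitally connected finite subset of $\mathbb{Z}$ must therefore be an unbroken run of consecutive integers, hence $(2,2)$-isomorphic to a digital interval $[0,N]_{\mathbb{Z}}$ for some $N \geq 0$. In particular $X$ admits no injective digitally continuous map from $C_{m}$ for any $m \geq 4$: such a map would import $m$ cyclic adjacencies into a linear configuration that supports only $m - 1$ consecutive ones, which is impossible. So no simple $m$-loop can live in $X$, and Proposition \ref{proposition 2} delivers a digital homotopy equivalence $X \simeq \{\ast\}$. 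Theorem \ref{theorem 3} upgrades this equivalence to $L_{1}(X) = L_{1}(\{\ast\}) = 1$.

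With $L_{1}(X) = 1$ in hand, Lemma \ref{lemma 1} gives that $X$ is $2$-contractible, and Proposition \ref{proposition 1} concludes TC$(X,2) = 1$. I do not anticipate a substantive obstacle here; the only point that needs care is the initial structural observation that $2$-connectedness in $\mathbb{Z}$ forces $X$ to be a digital interval, and that is a one-line consequence of the definition of $2$-adjacency. Everything after that is a direct citation chain.
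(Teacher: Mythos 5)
Your proof is correct and follows the same route the paper intends (the paper simply cites Lemma \ref{lemma 1} and Proposition \ref{proposition 1}, and you supply the structural details that a connected finite subset of $\mathbb{Z}$ is a digital interval with no simple $m$-loops). One small remark: once Proposition \ref{proposition 2} gives you $X \simeq \{\ast\}$ you already have $2$-contractibility, so the detour through $L_{1}(X)=1$ and back via Lemma \ref{lemma 1} is logically redundant, though harmless.
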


\quad We now provide the digital topological complexity numbers of digital simple closed curves in $\mathbb{Z}^{2}$.

\begin{theorem} \label{theorem 2}
	Let $C_{m}$ be a nonempty $\kappa-$connected digital simple closed curve for any positive integer $m$, where $\kappa \in \{4,8\}$. Then  
	\begin{eqnarray*}\text{TC}(C_{m},\kappa) = 
		\begin{cases}
			1, & m < 5 \\
			2, & m > 5.
		\end{cases}
	\end{eqnarray*}
\end{theorem}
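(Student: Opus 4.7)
The plan is to split into two ranges of $m$ and handle each with tools already available. For the $m<5$ branch, the definition of a digital simple closed curve forces $m\geq 4$, so the only case is $m=4$. Then $C_4$ is a digitally connected image with four points, and Proposition \ref{proposition 3} gives that $C_4$ is digitally homotopy equivalent to a one-point image, i.e.\ $\kappa$-contractible. Proposition \ref{proposition 1} then yields $\text{TC}(C_4,\kappa)=1$.

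For $m>5$, the lower bound is immediate: Proposition \ref{proposition 5} says $C_m$ is irreducible for $m\geq 5$, and Proposition \ref{corollary 1} then gives $\text{TC}(C_m,\kappa)>1$. So I only need to produce a motion planner cover of size $2$ on $C_m\times C_m$.

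To do that, label the cycle $C_m=\{c_0,\ldots,c_{m-1}\}$ with $c_i\leftrightarrow_\kappa c_{i+1}$ (indices mod $m$), set $k=\lfloor m/2\rfloor$, and partition
\[
U_1 = \{(c_i,c_j) : (j-i)\bmod m \leq k\}, \qquad U_2 = \{(c_i,c_j) : (j-i)\bmod m > k\}.
\]
On $U_1$, define $s_1(c_i,c_j)$ to be the clockwise arc $c_i,c_{i+1},\ldots,c_j$ padded with $c_j$ to total length $k$; on $U_2$, define $s_2(c_i,c_j)$ to be the counterclockwise arc padded to length $m-k-1$. By construction each $s_\ell$ is a set-theoretic section of $\pi$ over its domain, and each individual path is $(2,\kappa)$-continuous since it steps along the cyclic adjacencies.

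The main obstacle is continuity of $s_1$ and $s_2$, i.e.\ showing that whenever two pairs $(c_i,c_j)$ and $(c_{i'},c_{j'})$ are adjacent in $U_\ell$, the paths $s_\ell$ assigns to them are adjacent in $PX$ in the pointwise sense used in the preliminaries. I expect this to reduce to a short case analysis on $(i'-i,j'-j)\in\{-1,0,1\}^2\setminus\{(0,0)\}$: writing the position of $s_1(c_i,c_j)$ at time $t$ as $c_{i+\min(t,d_{ij})}$ with $d_{ij}=(j-i)\bmod m$, one checks that at each $t$ the two indices differ cyclically by at most one. The padding produces a harmless stall at the endpoint, which behaves well because neighboring pairs in $U_1$ satisfy $|d_{ij}-d_{i'j'}|\leq 2$. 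The counterclockwise case on $U_2$ is symmetric, and combining the two bounds gives $\text{TC}(C_m,\kappa)=2$ for $m>5$.
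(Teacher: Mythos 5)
Your proof is correct, and for the upper bound it takes a genuinely different route from the paper. The paper's proof is adjacency-specific and largely pictorial: it catalogues the nonempty curves $C_m$ for $\kappa=4$ and $\kappa=8$ separately, cuts the curve itself into two arcs $U_1,U_2$ using special points of the planar embedding (a diagonal pair of a square for $\kappa=4$, a top and a bottom point for $\kappa=8$), covers $C_m\times C_m$ by $V_1=\{(x,y):x,y\in U_1\}$ and $V_2$ its complement, and then simply asserts that continuous sections $s_1,s_2$ of $\pi$ exist over $V_1,V_2$ without constructing them. You instead partition $C_m\times C_m$ by cyclic distance --- the digital analogue of the classical ``shorter arc versus longer arc'' planner on $S^1$ --- and give explicit formulas $c_{i+\min(t,d_{ij})}$ for the sections together with a checkable continuity argument (the key estimate $|d_{ij}-d_{i'j'}|\le 2$ for adjacent pairs does hold once $m\ge 6$, and the resulting index difference stays in $\{-1,0,1\}$ at every time step, so the argument goes through). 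This buys uniformity: your construction depends only on the cyclic adjacency structure of $C_m$, not on $\kappa$ or on how the curve sits in $\mathbb{Z}^2$, and it supplies the verification the paper omits. You also make the lower bound $\mathrm{TC}(C_m,\kappa)>1$ explicit via Proposition \ref{proposition 5} and Proposition \ref{corollary 1}, which the paper's proof leaves implicit, and you dispatch $m<5$ via Proposition \ref{proposition 3} rather than by inspecting figures (note the paper's Figure \ref{fig2:figure2} also lists $C_1,C_2,C_3$ despite the definition requiring $m\ge 4$; Proposition \ref{proposition 3} covers those cases too, so nothing is lost). The only item of the paper's proof you do not reproduce is the remark that $C_5=\emptyset$ in $\mathbb{Z}^2$ for both adjacencies, but the nonemptiness hypothesis together with the stated case split makes that observation unnecessary for your argument.
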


\begin{proof}
	There are two adjacency relations $4$ and $8$ in $\mathbb{Z}^{2}$ so we have two cases. First, consider the $4-$adjacency on $C_{m}$. We catalog the first $12$ nonempty simple closed curves with respect to the number $m$ in this case (see Figure \ref{fig2:figure2}).
	\begin{figure*}[h]
		\centering
		\includegraphics[width=0.80\textwidth]{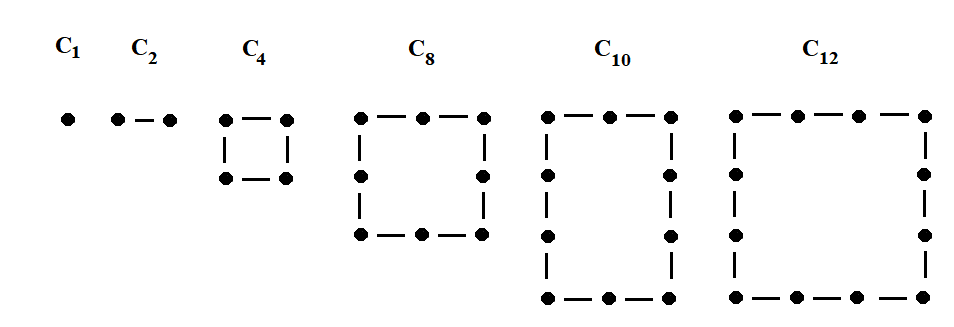}
		\caption{Nonempty simple closed curve $C_{m}$ related to $4-$adjacency for $m = 1, ..., 12$.}
		\label{fig2:figure2}
	\end{figure*} 
We note that some graphics can be different (but homotopy equivalent) in Figure \ref{fig2:figure2}. For instance, the points of $C_{2}$ can be drawn vertically. This does not effect the result as the digital topological complexity number is a homotopy invariant for digital images. For $m>\nolinebreak12$, the list is extended. However, the computation of TC changes only when $m > 5$. Let $m < 5$. We have TC$(C_{m},4) = 1$ because they are $4-$contractible digital images. If $m > 5$, then we show that TC$(C_{m},4) = 2$. Let us choose any two diagonally points (the diagonal can be from left to right or from right to left) on any squares or rectangles for any $m > 5$ and divide the graphic into two parts named as $U_{1}$ and $U_{2}$. Without loss of generality, we assume that $U_{1}$ has one of the diagonal points and $U_{2}$ has the other point. Then \linebreak$U_{1}$ and $U_{2}$ have the same number of points. We set \[V_{1} = \{(x,y) \in C_{m} \times C_{m} \hspace*{0.2cm} | \hspace*{0.2cm} (x,y) \in U_{1}\}\] and \[V_{2} = \{(x,y) \in C_{m} \times C_{m} \hspace*{0.2cm} | \hspace*{0.2cm} (x,y) \in U_{2} \hspace*{0.2cm} \text{or} \hspace*{0.2cm} x \in U_{1}, y \in U_{2} \hspace*{0.2cm} \text{or} \hspace*{0.2cm} x \in U_{2}, y \in U_{1}\}\]
as the subsets of $C_{m} \times C_{m}$. Therefore, we get $C_{m} \times C_{m} = V_{1} \cup V_{2}$. In addition, there exist digitally continuous sections $s_{1} : V_{1} \rightarrow PC_{m}$ and $s_{2} : V_{2} \rightarrow PC_{m}$ of a digital fibration $\pi : PC_{m} \rightarrow C_{m} \times C_{m}$. These satisfy that $\pi \circ s_{1} = id_{V_{1}}$ and $\pi \circ s_{2} = id_{V_{2}}$ and give the desired result for $4-$adjacency. Similarly, we list the first $8$ nonempty simple closed curves with $8-$adjacency in Figure \ref{fig3:figure3}.
\begin{figure*}[h]
	\centering
	\includegraphics[width=0.90\textwidth]{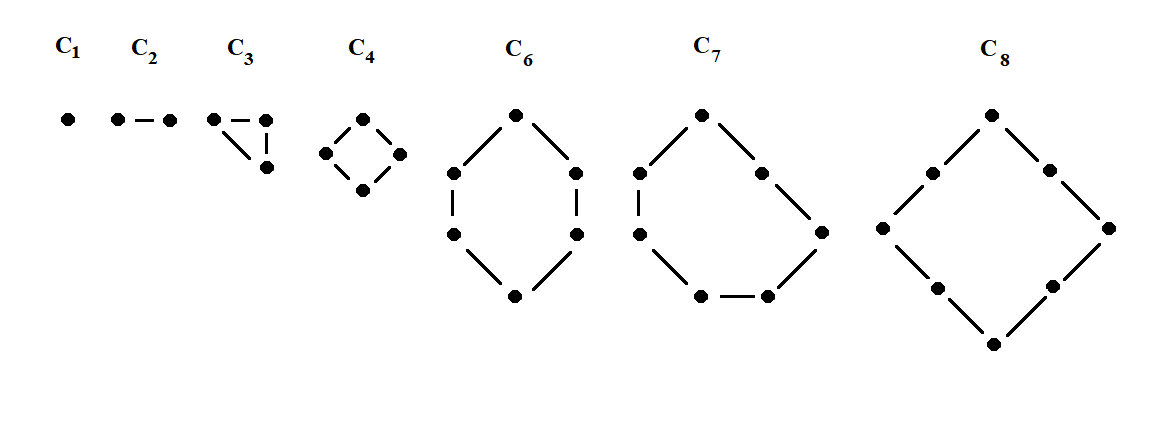}
	\label{fig3:figure3}
	\caption{Nonempty simple closed curve $C_{m}$ related to $8-$adjacency for $m = 1, ..., 8$.}
\end{figure*} 
For $m < 5$, $C_{m}$ is $8-$contractible. Then we have that TC$(C_{m},8) = 1$. For $m >5$, we choose the top and the bottom point of $C_{m}$ (if there are one more top or bottom points, then choose one pair of them such that they are located vertically according to the each other) and divide the graphic into two parts named as $T_{1}$ and $T_{2}$. Without loss of generality, we assume that $T_{1}$ has the bottom point and $T_{2}$ has the top point. We set \[W_{1} = \{(x,y) \in C_{m} \times C_{m} \hspace*{0.2cm} | \hspace*{0.2cm} (x,y) \in T_{1}\}\] and \[W_{2} = \{(x,y) \in C_{m} \times C_{m} \hspace*{0.2cm} | \hspace*{0.2cm} (x,y) \in T_{2} \hspace*{0.2cm} \text{or} \hspace*{0.2cm} x \in T_{1}, y \in T_{2} \hspace*{0.2cm} \text{or} \hspace*{0.2cm} x \in T_{2}, y \in T_{1}\}\]
as the subsets of $C_{m} \times C_{m}$. Then we have digitally continuous sections \linebreak $t_{1} : W_{1} \rightarrow PC_{m}$ and $t_{2} : W_{2} \rightarrow PC_{m}$ of a digital map $ \pi : PC_{m} \rightarrow C_{m} \times C_{m}$ that satisfy that the digital maps $\pi \circ t_{1}$ and $\pi \circ t_{2}$ equal to the identity maps. Moreover, $C_{5}$ is an empty set for both $4$ and $8$ adjacencies. This completes the proof.
\end{proof}

\begin{corollary} \label{corollary 2}
	Let $X\subset \mathbb{Z}^{2}$ be a digitally connected digital image with $m$ points. TC$(X,8) = 1$ for $m < 6$ and TC$(X,4) = 1$ for $m < 8$.
\end{corollary}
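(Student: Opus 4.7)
By Proposition \ref{proposition 1}, TC$(X, \kappa) = 1$ is equivalent to $X$ being $\kappa$-contractible, so the plan is to verify $\kappa$-contractibility of $X$ in each range, organized by the point count $m$.

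For $m \in \{1, 2, 3, 4\}$ and either adjacency, Proposition \ref{proposition 3} directly gives that $X$ is digitally homotopy equivalent to a one-point image, hence contractible. For $m = 5$ and either adjacency, Proposition \ref{proposition 4} leaves only two possibilities: $X$ is homotopy equivalent to a one-point image or to $C_{5}$. Since the proof of Theorem \ref{theorem 2} observed that $C_{5}$ is empty in both $4$- and $8$-adjacency in $\mathbb{Z}^{2}$, only the first option survives, so $X$ is contractible. This already settles the $8$-adjacency statement of the corollary.

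For $\kappa = 4$ and $m \in \{6, 7\}$, my plan is to invoke Lemma \ref{lemma 2}(a), which reduces contractibility to reducibility provided that $C_{m}$ is empty in $\mathbb{Z}^{2}$ with $4$-adjacency. I would verify this hypothesis with two inputs: (i) a parity argument, since every $4$-edge in $\mathbb{Z}^{2}$ flips the parity of $x + y$, every closed $4$-walk has even length and $C_{k}$ is empty for all odd $k$, which handles $k = 7$; and (ii) a short enumeration for $k = 6$ of the four possible step patterns $(a,b) \in \{(3,0),(2,1),(1,2),(0,3)\}$ for a closed length-six $4$-walk ($a$ horizontal and $b$ vertical steps in each direction), checking that each such walk either revisits a point or produces an extra chord $4$-adjacency inside the image, so $C_{6}$ is empty.

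The principal obstacle is then showing that every such $X$ is reducible. My plan is an ear-removal argument: $X$ is a $4$-connected polyomino of at most $7$ cells in $\mathbb{Z}^{2}$, and since the smallest hole-boundary of a polyomino requires an $8$-cell $C_{8}$ frame, no $X$ with $m \leq 7$ can enclose a hole. A simply connected polyomino always admits a cell $v$ whose removal leaves a $4$-connected polyomino; the map $X \to X \setminus \{v\}$ collapsing $v$ onto one of its $4$-neighbours in $X$ and acting as identity elsewhere is a one-step digital homotopy equivalence, so $X \simeq X \setminus \{v\}$ with $|X \setminus \{v\}| < m$. Iterating this reduction falls into the already-handled $m \leq 5$ case, yielding reducibility and hence, by Lemma \ref{lemma 2}(a), contractibility of $X$.
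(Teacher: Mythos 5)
Your treatment of $m\le 5$ coincides with the paper's: Proposition \ref{proposition 3} for $m\le 4$, Proposition \ref{proposition 4} together with the emptiness of $C_5$ for $m=5$, and Proposition \ref{proposition 1} to convert contractibility into TC$=1$. For $m\in\{6,7\}$ with $\kappa=4$ the paper simply asserts that $C_6,C_7=\emptyset$ and that $X$ is reducible, then invokes Lemma \ref{lemma 2}(a); you try to actually prove the reducibility, which is the right instinct, and your verification that $C_6$ and $C_7$ are empty (parity for odd length, enumeration of closed $6$-walks) is sound.

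However, your ear-removal mechanism has a genuine gap. You claim that collapsing a cell $v$ onto one of its $4$-neighbours $u$ and fixing everything else is a one-step digital homotopy equivalence $X\to X\setminus\{v\}$. For the retraction $r$ (with $r(v)=u$, $r(x)=x$ otherwise) to be $(4,4)$-continuous, every $4$-neighbour $w\neq u$ of $v$ in $X$ must satisfy $r(w)=w\leftrightarrow_{4}u$ or $w=u$. But in $\mathbb{Z}^2$ two distinct $4$-neighbours of the same point are never $4$-adjacent to each other (they lie at $\ell_1$-distance $2$), so the collapse map is continuous only when $v$ has exactly one $4$-neighbour in $X$, i.e.\ $v$ is a leaf of the adjacency graph. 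A $4$-connected image with $6$ or $7$ points need not have a leaf: the $2\times 3$ rectangle has six points all of degree at least $2$, and two $2\times 2$ squares sharing a corner give a seven-point example. These images are in fact reducible, but only via maps that move more than one point at a time (e.g.\ folding a whole column of the $2\times 3$ rectangle onto the adjacent column), so your "remove one non-cut cell" induction does not go through as stated. To close the gap you would need either to classify the $4$-connected images on $6$ and $7$ points and exhibit a reduction for each leafless case, or to replace the single-cell collapse by a more flexible retraction argument.
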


\begin{proof}
	Let $m \leq 4$. By Proposition \ref{proposition 3}, $X$ is digitally homotopy equivalent to the one-point digital image. Then, we have that TC$(X,\kappa) = 1$, where $\kappa \in \{4,8\}$. Let $m = 5$. From Proposition \ref{proposition 4} and Proposition \ref{proposition 1}, we get TC$(X,\kappa) = 1$, where $\kappa \in \{4,8\}$. Let $m = 6$ or $7$. Then $C_{m}$ is an empty set with respect to $4-$adjacency. Then $X$ is digitally contractible because $X$ is reducible. This shows that TC$(X,4) = 1$ for $m = 6$ or $m=7$. 
\end{proof}

\quad We are now ready to compute the topological complexity number of any finite digital image in $\mathbb{Z}^{2}$. This characterization indicates that there is no any finite digital image in $\mathbb{Z}^{2}$ whose topopological complexity number is greater than 2.
\begin{corollary}
	Let $X\subset \mathbb{Z}^{2}$ be a $\kappa-$connected digital image with $m$ points. If $C_{m} \neq \emptyset$ and $X$ is digitally homotopy equivalent to $C_{m}$, then we get that 
	\begin{eqnarray*} \text{TC}(X,\kappa) =
		\begin{cases}
			1, & \kappa = 4 \hspace*{0.2cm} \text{and} \hspace*{0.2cm} m < 8 \\
			1, & \kappa = 8 \hspace*{0.2cm} \text{and} \hspace*{0.2cm} m < 6
		\end{cases}
	\end{eqnarray*}
and
\begin{eqnarray*} \text{TC}(X,\kappa) =
	\begin{cases}
		2, & \kappa = 4 \hspace*{0.2cm} \text{and} \hspace*{0.2cm} m \geq 8 \\
		2, & \kappa = 8 \hspace*{0.2cm} \text{and} \hspace*{0.2cm} m \geq 6.
	\end{cases}
\end{eqnarray*}
Otherwise, we have that TC$(X,\kappa) = 1$, where $\kappa \in \{4,8\}$ for any $m$.
\end{corollary}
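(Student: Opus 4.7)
My plan is to split on whether $C_m \neq \emptyset$ and $X \simeq C_m$, reducing each branch to earlier results of the paper.

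\emph{Case 1: $C_m \neq \emptyset$ and $X \simeq C_m$.} Since $\mathrm{TC}$ is a digital homotopy invariant, $\mathrm{TC}(X,\kappa) = \mathrm{TC}(C_m,\kappa)$, and I would finish with Theorem \ref{theorem 2}. The apparent gap between the cutoff $m=5$ of Theorem \ref{theorem 2} and the thresholds stated in this corollary ($m = 6$ for $\kappa = 8$, $m = 8$ for $\kappa = 4$) will be bridged by cataloging, along the lines of Figures \ref{fig2:figure2} and \ref{fig3:figure3}, the values of $m$ for which $C_m$ is nonempty in each adjacency. For $\kappa = 4$, the only nonempty $C_m$ with $m \leq 7$ is the $2 \times 2$ block $C_4$, which is $4$-contractible and hence has $\mathrm{TC} = 1$, while any nonempty $C_m$ with $m \geq 8$ satisfies $m > 5$ and so has $\mathrm{TC} = 2$; the analogous check handles the cutoff $m = 6$ for $\kappa = 8$.

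\emph{Case 2 (Otherwise).} Here $C_m = \emptyset$ or $X \not\simeq C_m$, and the goal is to show $X$ is $\kappa$-contractible, whence $\mathrm{TC}(X,\kappa) = 1$ by Proposition \ref{proposition 1}. For $m < 6$ when $\kappa = 8$ (respectively $m < 8$ when $\kappa = 4$), this is immediate from Corollary \ref{corollary 2}. For $m$ at or above those thresholds, the appropriate part of Lemma \ref{lemma 2}---part (a) when $C_m = \emptyset$, part (b) when $C_m \neq \emptyset$ but $X \not\simeq C_m$---reduces the problem to showing that $X$ is reducible.

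\emph{Main obstacle.} Establishing reducibility of $X$ in that last sub-case is the delicate step. I would argue by contradiction using the contrapositive of Proposition \ref{proposition 2}: if $X$ were irreducible (hence non-contractible), then $X$ would carry a simple $m'$-loop for some $m' \geq 4$, realized as an injection $C_{m'} \to X$. The combinatorial structure of $\kappa$-adjacency in $\mathbb{Z}^2$---the near-impossibility of attaching extra points to a simple loop in $\mathbb{Z}^2$ without generating forbidden $\kappa$-adjacencies that violate the simple-loop condition---should force $m' = m$ and $X \simeq C_m$, contradicting the Case 2 hypothesis. Making this structural claim rigorous uniformly for $\kappa \in \{4,8\}$, while carefully ruling out configurations of additional ``tail'' points that could attach to a loop of length $m' < m$, is the main technical content I expect to need.
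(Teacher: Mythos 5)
Your overall route coincides with the paper's: homotopy invariance of TC together with Theorem \ref{theorem 2} and Corollary \ref{corollary 2} dispose of the case $C_{m} \neq \emptyset$, $X \simeq C_{m}$ (including the bookkeeping about which $C_{m}$ are actually nonempty below the thresholds $m=8$ and $m=6$), and Lemma \ref{lemma 2} reduces the ``otherwise'' case to showing that $X$ is reducible. The point at which you stop --- actually establishing reducibility of $X$ when $C_{m} = \emptyset$ or $X \not\simeq C_{m}$ --- is a genuine gap in the proposal: you describe a strategy (the contrapositive of Proposition \ref{proposition 2}, followed by a structural analysis of simple loops in $\mathbb{Z}^{2}$ forcing $m'=m$ and $X \simeq C_{m}$) but do not carry it out, and without it the ``otherwise'' branch is unproved for all $m$ above the thresholds handled by Corollary \ref{corollary 2}.

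For comparison, the paper dispatches this step in one line --- ``since $C_{m}$ is irreducible for $m \geq 5$, $X$ is reducible'' --- which as literally stated is a non sequitur; the intended argument is exactly the one you sketch, and it is already embedded in the proof of Lemma \ref{lemma 2}: if $X$ is not contractible, then by Proposition \ref{proposition 2} it admits a simple loop $p : C_{m'} \rightarrow X$, and the cardinality count forces $p$ to be a bijection, hence a digital isomorphism $X \cong C_{m'}$, contradicting $C_{m} = \emptyset$ or $X \not\simeq C_{m}$. That count, however, silently identifies the loop length $m'$ with the number $m$ of points of $X$ --- precisely the ``tail points'' issue you raise. So the obstacle you flag is real, it is not genuinely resolved in the paper either, and a complete proof (yours or the paper's) needs the additional claim that a non-contractible connected image in $\mathbb{Z}^{2}$ whose simple loops all miss some points is nevertheless reducible (e.g.\ by retracting the tails onto the loop). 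Your instinct to isolate that claim is correct; as submitted, the proposal does not contain its proof.
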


\begin{proof}
	By Theorem \ref{theorem 2} and Corollary \ref{corollary 2}, it is enough to show that 
	\begin{eqnarray*}
		\text{TC}(X,4) = \text{TC}(X,8) = 1
	\end{eqnarray*} when $C_{m} = \emptyset$ or $X$ is not digitally homotopy equivalent to $C_{m}$. Let $C_{m} = \emptyset$. If $m \leq 4$, then the result holds from Proposition \ref{proposition 3}. If $m \geq 5$, then we have that $X$ is reducible from Proposition \ref{proposition 5}. Hence, the first part a) of Lemma \ref{lemma 2} gives the desired result. Assume that the digital image $X$ is not digitally homotopy equivalent to $C_{m}$. Then TC$(X,\kappa) \neq 2$. Let $C_{m}$ be nonempty and let $m \geq 5$. Since $C_{m}$ is irreducible for $m \geq 5$, $X$ is reducible. Thus, the second part b) of Lemma \ref{lemma 2} completes the proof.
\end{proof}
\section{Digital Higher Topological Complexity of Finite $2$D Digital Images}
\label{sec:2}

\quad We aim to give a general characterization for the digital higher topological complexity computations of any finite digital image especially in $\mathbb{Z}^{2}$ in this section.

\quad We begin with computing the digital higher topological complexity TC$_{n}$ of any one-point digital image for $n \geq 1$. Consider $X = \{\ast\} \subset \mathbb{Z}$ with $2-$adjacency. Let $f \in X^{J_{n}}$ be a constant map at $\ast$. The digital fibration $e_{n} : X^{J_{n}} \rightarrow X^{n}$, defined by $e_{n}(f) = (\ast, \ast, ..., \ast)$, has a digitally continuous map $s : X^{n} \rightarrow X^{J_{n}}$ with $s(\ast, \ast, ..., \ast) = f$ such that $e_{n} \circ s = id$. This shows that TC$_{n}(X) = 1$, where $X$ is a one-point digital image. 

\begin{theorem}
	Let $(X,\kappa)$ be a finite $\kappa-$connected digital image in $\mathbb{Z}$ and $n \geq 1$ be an integer. Then TC$_{n}(X,\kappa) = 1$.
\end{theorem}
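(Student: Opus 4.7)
The plan is to exploit two facts already established: (i) any digitally connected finite image $X \subset \mathbb{Z}$ is digitally contractible (so homotopy equivalent to a one-point digital image), and (ii) TC$_{n}$ is a digital homotopy invariant. Once those two ingredients are in place, the theorem drops out from the computation TC$_{n}(\{\ast\}) = 1$ carried out just before the statement.

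First I would observe that the only nontrivial adjacency available on a subset of $\mathbb{Z}$ is the $2$-adjacency, so the statement really concerns $\kappa = 2$. Next, I would argue that any $2$-connected finite $X \subset \mathbb{Z}$ is a digital interval $[a,b]_{\mathbb{Z}}$: connectedness forces $X$ to contain every integer between its minimum and maximum, because two points of $X$ that differ by more than $1$ cannot be $c_{1}$-adjacent and a connecting path must pass through every intermediate integer. From here one could either invoke Lemma \ref{lemma 1} (noting that $L_{1}$ on an interval counts exactly one equivalence class since an interval has no essential $1$-loops) or simply cite the Corollary preceding Theorem \ref{theorem 2} together with Proposition \ref{proposition 1}, which already yields that $X$ is $2$-contractible.

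Having established that $X$ is digitally homotopy equivalent to the one-point image $\{\ast\}$, the final step is to invoke homotopy invariance of TC$_{n}$, which is explicitly recorded after the definition of the digital higher topological complexity. This gives
\[
\text{TC}_{n}(X,2) \;=\; \text{TC}_{n}(\{\ast\},2) \;=\; 1,
\]
where the second equality is precisely the one-point computation carried out immediately before the theorem via the obvious section $s : \{\ast\}^{n} \to \{\ast\}^{J_{n}}$.

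There is no genuine obstacle here: the only mildly subtle point is to make sure the homotopy equivalence $X \simeq \{\ast\}$ really does transport through the TC$_{n}$ invariance statement (as opposed to just TC $=$ TC$_{2}$), but since the paper has already asserted that TC$_{n}$ is a homotopy invariant for all $n$, this is free. The whole proof is therefore a short chain of references, and the main content is the structural observation that connected finite subsets of $\mathbb{Z}$ are digital intervals and hence contractible.
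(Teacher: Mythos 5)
Your proposal is correct and follows exactly the paper's argument: establish that a finite connected image in $\mathbb{Z}$ is contractible (the paper simply asserts this as "easy to see," while you supply the interval argument), then invoke the homotopy invariance of TC$_{n}$ together with the one-point computation TC$_{n}(\{\ast\})=1$. The only difference is that you spell out the contractibility step in more detail than the paper does.
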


\begin{proof}
	If $(X,\kappa)$ is finite and $\kappa-$connected in $\mathbb{Z}$, then it is easy to see that $X$ is $\kappa-$contractible. Hence, it is $\kappa-$homotopy equivalent to the one-point digital image. The digital homotopy invariance of TC$_{n}$ gives the desired result. 
\end{proof}

\quad The digital higher topological complexity computation of a one-point digital image is quite useful because a great majority of digital images in $\mathbb{Z}^{2}$ is digitally contractible (have the same homotopy type with the one-point image). We now examine the digital higher topological complexity of another type which is not homotopy equivalent to the one-point image.

\begin{lemma} \label{Lemma 3}
	Let $(X,\kappa)$ be a $\kappa-$connected digital image. Consider the set \[S_{n}(X) = \{(f, p_{1}, p_{2}, ..., p_{n}) \hspace*{0.2cm} | \hspace*{0.2cm} p_{i} \in Im(f), f \hspace*{0.2cm} \text{is a digital path in $X$}, i = 1, 2, ..., n\}\] in $X^{[0,m]_{\mathbb{Z}}} \times X^{n}$. Then the digital map 
	\begin{eqnarray*}
		&&e_{n}^{'} : S_{n}(X) \longrightarrow X^{n}\\
		&& \hspace*{0.6cm} (f, p_{1}, p_{2}, ..., p_{n}) \longmapsto (p_{1}, p_{2}, ..., p_{n})
	\end{eqnarray*}
is a digital fibrational substitute of the diagonal map $d_{n} : X \rightarrow X^{n}$. 
\end{lemma}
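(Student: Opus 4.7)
The plan is to verify the two requirements of the definition of a digital fibrational substitute: exhibit a digital homotopy equivalence $h : X \to S_n(X)$ for which $e_n' \circ h = d_n$, and then check that $e_n'$ itself is a digital fibration.

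For the first step I would take $h(x) = (c_x, x, x, \dots, x)$, where $c_x$ is the constant path at $x$. Commutativity of the required square is immediate since $e_n'(h(x)) = (x, x, \dots, x) = d_n(x)$, and digital continuity of $h$ follows because $\kappa$-adjacent $x, x' \in X$ yield coordinate-wise adjacent images in $S_n(X) \subset X^{[0,m]_{\mathbb{Z}}} \times X^n$. For the homotopy inverse I would set $g(f, p_1, \dots, p_n) = f(0)$; then $g \circ h = \mathrm{id}_X$ by construction, and we must build a digital homotopy from $h \circ g$ to $\mathrm{id}_{S_n(X)}$. Given $(f, p_1, \dots, p_n) \in S_n(X)$, choose parameters $t_i$ with $p_i = f(t_i)$. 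Then the homotopy can be produced in stages: first stretch the constant path $c_{f(0)}$ out along $f$ one step at a time until it becomes $f$, and in parallel slide each distinguished point from $f(0)$ along $f$ until it reaches $f(t_i) = p_i$. At every intermediate stage the selected points still lie in the image of the current truncation of $f$, so the intermediate configuration belongs to $S_n(X)$, and each elementary stage differs from the next by a single permitted move, certifying digital continuity.

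For the second step I would verify the digital homotopy lifting property of $e_n'$. Suppose a digital homotopy $K : Y \times [0, M]_{\mathbb{Z}} \to X^n$ is given together with an initial lift $\widetilde{K}_0 : Y \to S_n(X)$, say $\widetilde{K}_0(y) = (f_y, p_{1,y}, \dots, p_{n,y})$. Writing $K(y,s) = (q_{1,y,s}, \dots, q_{n,y,s})$, one defines the extended lift at step $s$ by concatenating onto $f_y$ short paths $\gamma_{i,y,s}$ from $p_{i,y}$ to $q_{i,y,s}$ inside $X$ (these exist because $X$ is $\kappa$-connected), and taking the new distinguished points to be $q_{i,y,s}$. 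By construction the distinguished points lie in the image of the extended path, so the lift stays in $S_n(X)$, and $e_n'$ applied to it recovers $K(y,s)$.

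The main obstacle will be the bookkeeping needed to guarantee digital continuity at every stage of both constructions: the homotopy $h \circ g \simeq \mathrm{id}_{S_n(X)}$ must change the path length and the marker positions by at most one step per stage, and the lift of $K$ must be produced so that the concatenated path extensions depend digitally continuously on $(y,s)$, in the sense that varying either by a single adjacency alters the corresponding extension by at most one adjacent move. Ensuring simultaneously that no distinguished point is ever left outside the current image of the path is the delicate part that makes the plan non-trivial.
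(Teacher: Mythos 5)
Your proposal follows essentially the same route as the paper: the section $h(x)=(\epsilon_x,x,\dots,x)$ with the constant path, a homotopy inverse that projects to the initial point of the path, and the observation that $e_n'\circ h=d_n$. You are in fact more thorough than the paper's own argument, which asserts the homotopy $h\circ k\simeq \mathrm{id}_{S_n(X)}$ without constructing it and never checks that $e_n'$ has the digital homotopy lifting property, whereas you address both (and correctly identify the step-by-step adjacency bookkeeping as the genuinely delicate point).
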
  

\begin{remark}
	Note that the adjacency relation on $S_{n}(X)$ is defined as follows: for all $(f, p_{1}, p_{2}, ..., p_{n})$, $(g, q_{1}, q_{2}, ..., q_{n}) \in S_{n}(X)$, $(f, p_{1}, p_{2}, ..., p_{n})$ is $\kappa_{\ast}-$adjacent to $(g, q_{1}, q_{2}, ..., q_{n})$ if $f$ is $\lambda-$adjacent to $g$ and $p_{i}$ is $\kappa-$adjacent to $q_{i}$ for all \linebreak $i = 1, 2, ...,n$, where $\kappa_{\ast}$ is an adjacency relation on $X^{[0,m]_{\mathbb{Z}}} \times X^{n}$ and $\lambda$ is an adjacency relation on digital paths in $X$. 
\end{remark} 

\begin{proof}
	Let $d_{n} : X \rightarrow X^{n}$ be a diagonal map of $X$. Define the digital map \linebreak $h : X \rightarrow S_{n}(X)$ by $h(x) = (\epsilon_{x}, x, x, ..., x)$, where $\epsilon_{x}$ is the digital constant path at $x$. Let $(f, p_{1}, p_{2}, ..., p_{n}) \in S_{n}(X)$. Then there exists $y \in X$ such that $f(0) = y$. Since $X$ is $\kappa-$connected, there exists a digital path $g$ from $x$ to $y$ in $X$, i.e. $g(0) = x$ and $g(1) = f(0) = y$. To show that $h$ is a digital homotopy equivalence, we define a digital map $k : S_{n}(X) \rightarrow X$ with $k(f, p_{1}, p_{2}, ..., p_{n}) = f \ast g(0)$. It is easy to see that $h \circ k$ is digitally homotopic to identity map on $S_{n}(X)$ and $k \circ h$ is digitally homotopic to identity map on $X$. Moreover, we find
	\begin{eqnarray*}
		e_{n}^{'} \circ h(x) = e_{n}^{'}(\epsilon_{x}, x, ..., x) = (x, x, ..., x) = d_{n}(x). 
	\end{eqnarray*}
    Consequently, $e_{n}^{'}$ is a digital fibrational substitute of $d_{n}$.
\end{proof}

\begin{lemma}\label{lemma4}
	TC$_{3}(C_{6},8) = 2$.
\end{lemma}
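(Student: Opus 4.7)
My plan is to establish $\text{TC}_3(C_6, 8) \geq 2$ and $\text{TC}_3(C_6, 8) \leq 2$ separately. For the lower bound, $C_6$ is irreducible by Proposition \ref{proposition 5}, so it is not digitally homotopy equivalent to the one-point image, and in particular is not $8$-contractible. Because the proof of Proposition \ref{proposition 1} adapts immediately to $\text{TC}_n$ (a global section of the fibrational substitute of $d_n$ would factor the identity on $X$ through a constant map, forcing contractibility), this rules out $\text{TC}_3(C_6, 8) = 1$. Alternatively, one may invoke the general monotonicity $\text{TC}_n \geq \text{TC}_2 = \text{TC}$, obtained by restricting any cover over $X^n$ to the slice $X^2 \times \{x_0\}^{n-2}$, combined with Theorem \ref{theorem 2} giving $\text{TC}(C_6, 8) = 2$.

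For the upper bound, I use the digital fibrational substitute $e_3' : S_3(C_6) \to C_6^3$ of the diagonal from Lemma \ref{Lemma 3} and construct a two-set cover of $C_6^3$ with digitally continuous sections, mimicking the splitting in the proof of Theorem \ref{theorem 2}. Label $C_6 = \{c_0, \ldots, c_5\}$ cyclically with $c_i \leftrightarrow_8 c_{i+1 \bmod 6}$, fix $c_3$ as a cut point, and set
\[ U_1 = \{(p_1, p_2, p_3) \in C_6^3 : c_3 \notin \{p_1, p_2, p_3\}\}, \qquad U_2 = C_6^3 \setminus U_1. \]
On $U_1$, every coordinate lies in the contractible five-point digital arc $A = C_6 \setminus \{c_3\}$, so I define $s_1(p_1, p_2, p_3) = (g, p_1, p_2, p_3)$, where $g : [0,4]_{\mathbb{Z}} \to C_6$ is the digital path traversing $A$; its image is $A$, which contains every coordinate. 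On $U_2$, where at least one coordinate equals $c_3$, I define $s_2(p_1, p_2, p_3) = (h, p_1, p_2, p_3)$, where $h : [0,5]_{\mathbb{Z}} \to C_6$ traverses the full cycle, so its image is $C_6$ and contains every coordinate.

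The main obstacle is verifying $\kappa_{\ast}$-continuity of $s_1$ and $s_2$ against the adjacency from the Remark, which couples path-adjacency with coordinate-adjacency: an adjacent pair of triples in $U_i$ must be sent to a $\kappa_{\ast}$-adjacent pair in $S_3(C_6)$. Since $g$ and $h$ are held fixed across $U_1$ and $U_2$ respectively, one relies on the inherited product-type adjacency (as in the rules for $X \times Y$) that treats a path as $\lambda$-adjacent to itself, at which point the componentwise adjacency of the triples lifts directly. Should this reflexive case be unavailable under the Remark's convention, a small family of paths obtained by cyclically shifting $g$ (respectively $h$) by one step would be used instead, exploiting the contractibility of $A$ and the symmetry of $C_6$ to keep consecutively chosen paths $\lambda$-adjacent. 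Combining this two-set cover with the lower bound yields $\text{TC}_3(C_6, 8) = 2$.
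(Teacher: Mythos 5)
Your proof is correct in substance but reaches the upper bound by a genuinely different decomposition of $C_6^3$ than the paper's. The paper orders the six points cyclically ($p_1<\cdots<p_6$) and splits $X^3$ into $A_1$, the triples that respect the cyclic order from left to right (allowing the wraparound $p_6\to p_1$), and $A_2$, the rest; the section over each piece sends a triple to the ``monotone'' route through its coordinates, so the path genuinely varies with the triple. You instead split by a cut point: $U_1$ consists of triples avoiding $c_3$, covered by one fixed path traversing the contractible arc $C_6\setminus\{c_3\}$, and $U_2$ is the complement, covered by one fixed path traversing the whole cycle. Your version makes the continuity of the sections essentially trivial (the path coordinate is constant on each piece, and the Remark's product-type adjacency, read with the usual digital convention that equality counts, handles the rest), which is arguably cleaner than the paper's, where one must check that adjacent triples in $A_1$ are assigned adjacent routes. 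The paper's order-based splitting, on the other hand, is the one that is recycled verbatim in the later theorem for general $n$ and general $C_m$, so it earns its extra bookkeeping there; your cut-point splitting would also generalize, but you would need to re-argue that the remaining arc is contractible for each $\kappa$. A further difference worth noting: the paper's proof only exhibits the two-set cover and silently concludes $\mathrm{TC}_3=2$ rather than $\leq 2$; you explicitly supply the missing lower bound, either by adapting Proposition \ref{proposition 1} to $\mathrm{TC}_n$ or via the monotonicity $\mathrm{TC}_3\geq \mathrm{TC}_2=\mathrm{TC}$ together with Theorem \ref{theorem 2}. Neither of those two facts is proved in the paper, so strictly speaking your lower bound rests on standard but unverified digital analogues of classical statements; still, this is more than the paper itself offers, and the monotonicity argument by restricting a cover of $X^3$ to the slice $X^2\times\{x_0\}$ is routine.
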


\begin{proof}
	Let 
	\begin{eqnarray*}
		&&X = C_{6} = \{p_{1} = (0,0), p_{2} = (1,1), p_{3} = (2,1), p_{4} = (3,0),\\ &&\hspace*{1.8cm}p_{5} = (2,-1), p_{6} = (1,-1)\},
	\end{eqnarray*}
where $p_{1} < p_{2} < p_{3} < p_{4} < p_{5} < p_{6}$ (see Figure \ref{fig4:figure4}). Let $e_{3}^{'} : S_{3}(X) \rightarrow X^{3}$ be a digital fibration with $e_{3}^{'}(f, p_{i}, p_{j}, p_{k}) = (p_{i}, p_{j}, p_{k})$ for $i,j,k \in \{1,2,3,4,5,6\}$. We divide $X^{3}$ into two parts. $A_{1}$ consists of triples in $C_{6}$ such that the order of points never changes from left to right, i.e. $p_{i} \leq p_{j} \leq p_{k}$ or if $p_{i} > p_{j}$, then $p_{i} = 6$ and $p_{j} = 1$ (similarly if $p_{j} > p_{k}$, then $p_{j} = 6$ and $p_{k} = 1$). $A_{2}$ consists of elements of $C_{6}$ in which they do not belong to $A_{1}$, i.e. the order of points can change from left to right except using $6$ and $1$ consecutively. Let $(p_{i},p_{j},p_{k}) \in A_{1}$. Using these points, we set a route starting and ending at $p_{i}$ and $p_{k}$, respectively. Then we have a digitally continuous map $s_{1} : A_{1} \rightarrow S_{3}(X)$ with $s_{1}(p_{i},p_{j},p_{k}) = (f, p_{i},p_{j},p_{k})$, where $f$ is the route (digital path from $p_{i}$ to $p_{k}$). It is clear that $e_{3}^{'} \circ s_{1} = id_{S_{3}(X)}$. Similarly, we can construct $s_{2} : A_{2} \rightarrow S_{3}(X)$ with $s_{2}(p_{i},p_{j},p_{k}) = (f, p_{i},p_{j},p_{k})$ over $A_{2}$. Hence, we find that $e_{3}^{'} \circ s_{2} = id_{S_{3}(X)}$. Moreover, we have that $X^{3} = A_{1} \cup A_{2}$. As a result, we get $genus_{\kappa_{\ast}, \lambda_{\ast}}(e_{3}^{'}) = 2$, where $\kappa_{\ast}$ and $\lambda_{\ast}$ are adjacency relations on $S_{3}(X)$ and $X^{3}$, respectively.
\end{proof}
\begin{figure*}[h]
	\centering
	\includegraphics[width=0.70\textwidth]{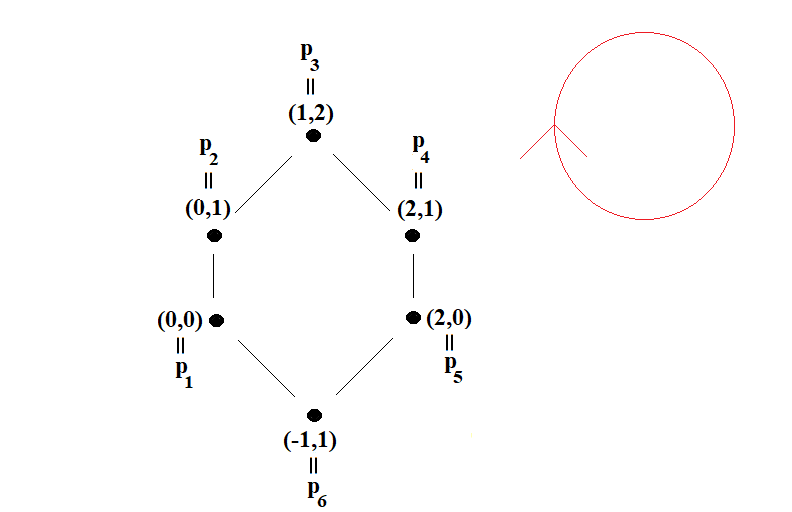}
	\caption{$C_{6}$ with the order of points in it}
	\label{fig4:figure4}
\end{figure*} 
\newpage
\begin{lemma}\label{lemma5}
	TC$_{3}(C_{8},4) = 2$.
\end{lemma}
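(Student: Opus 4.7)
The plan is to adapt the argument of Lemma \ref{lemma4} verbatim in structure, replacing the $8$-adjacency hexagonal cycle by the $4$-adjacency octagonal cycle. First I would fix an explicit enumeration of the eight points of $C_{8}$ under $4$-adjacency, for example placing them on the boundary of a $3\times 3$ square so that $p_{1}=(0,1),\ p_{2}=(0,2),\ p_{3}=(1,2),\ p_{4}=(2,2),\ p_{5}=(2,1),\ p_{6}=(2,0),\ p_{7}=(1,0),\ p_{8}=(0,0)$ with $p_{i}\leftrightarrow_{4} p_{i+1}$ cyclically, and put a linear order $p_{1}<p_{2}<\dots<p_{8}$ on them, as in Lemma \ref{lemma4}.

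Next I would apply Lemma \ref{Lemma 3} to work with the digital fibrational substitute $e_{3}':S_{3}(C_{8})\to C_{8}^{3}$ of the diagonal $d_{3}:C_{8}\to C_{8}^{3}$, so that computing $\mathrm{TC}_{3}(C_{8},4)$ reduces to computing the digital Schwarz genus of $e_{3}'$. I would then partition $C_{8}^{3}=A_{1}\cup A_{2}$ exactly as in the proof of Lemma \ref{lemma4}, with $A_{1}$ consisting of all triples $(p_{i},p_{j},p_{k})$ whose indices are weakly increasing in the cyclic sense (i.e. $p_{i}\leq p_{j}\leq p_{k}$, or one of the two consecutive relations wraps around, namely from $p_{8}$ to $p_{1}$), and $A_{2}$ being the complement. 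Over $A_{1}$ define $s_{1}(p_{i},p_{j},p_{k})=(f,p_{i},p_{j},p_{k})$ where $f$ is the unique shortest digital path in $C_{8}$ that starts at $p_{i}$, passes through $p_{j}$, and ends at $p_{k}$ traversing the cycle in the positive direction; over $A_{2}$ define $s_{2}$ analogously by traversing the cycle in the opposite direction. In both cases the composition $e_{3}'\circ s_{r}$ is the identity on $A_{r}$.

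The main obstacle is verifying that $s_{1}$ and $s_{2}$ are digitally continuous with respect to the adjacency relation on $S_{3}(C_{8})$ described in the remark following Lemma \ref{Lemma 3}. This amounts to checking that if two triples in $A_{r}$ are adjacent componentwise in $C_{8}^{3}$, then the two routes produced are adjacent as digital paths; the partitioning rule that forbids the single wrap-around step $p_{8}\to p_{1}$ inside $A_{1}$ (and its mirror inside $A_{2}$) is precisely what prevents a discontinuous jump from a path going the short way around to one going the long way. Once this is granted, the construction shows $\mathrm{genus}_{\kappa_{\ast},\lambda_{\ast}}(e_{3}')\leq 2$, hence $\mathrm{TC}_{3}(C_{8},4)\leq 2$. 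For the matching lower bound, I would invoke that $C_{8}$ is irreducible by Proposition \ref{proposition 5}, so it is not digitally contractible; since $\mathrm{TC}_{n}$ is a digital homotopy invariant and equals $1$ only on contractible images (in the same spirit as Proposition \ref{proposition 1}, and as used implicitly in Lemma \ref{lemma4}), we get $\mathrm{TC}_{3}(C_{8},4)>1$, completing the equality.
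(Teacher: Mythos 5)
Your proposal follows essentially the same route as the paper: both reduce to the digital Schwarz genus of $e_{3}'$ via Lemma \ref{Lemma 3}, order the eight points of $C_{8}$ along the boundary of a $3\times 3$ square, and split $C_{8}^{3}$ into the ``order-preserving'' triples and their complement, with a section over each piece. You are in fact somewhat more explicit than the paper about why the two sections are digitally continuous and about the lower bound $\mathrm{TC}_{3}(C_{8},4)>1$ via irreducibility, but the underlying argument is the same.
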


\begin{proof}
	Let 
	\begin{eqnarray*}
		&&X = C_{8} = \{r_{1} = (0,0), r_{2} = (0,1), r_{3} = (0,2), r_{4} = (1,2), r_{5} = (2,2),\\
		&&\hspace*{1.9cm}r_{6} = (2,1), r_{7} = (2,0), r_{8} = (1,0)\},
	\end{eqnarray*} 
where $r_{1} < r_{2} < r_{3} < r_{4} < r_{5} < r_{6} < r_{7} < r_{8}$ (see Figure \ref{fig5:figure5}). In a similar way of Lemma \ref{lemma4}, we get $B_{1}$ without changing the order of points and $t_{1} : B_{1} \rightarrow S_{3}(X)$ is a digitally continuous map over $C_{1}$ such that $e_{3}^{'} \circ t_{1}$ is identity over $B_{1}$. Changing the order of points in $C_{8}$, we set $B_{2}$ that consists of triples in $C_{8} \times C_{8} \times C_{8}$. The digitally continuous map $t_{2} : B_{2} \rightarrow S_{3}(X)$ gives us $e_{3}^{'} \circ t_{2}$ is identity over $S_{3}(X)$. Hence, we divide $X^{3}$ into two parts $B_{1}$ and $B_{2}$. This proves that TC$_{3}(X,4) = 2$. 
\end{proof}
\begin{figure*}[h]
	\centering
	\includegraphics[width=0.65\textwidth]{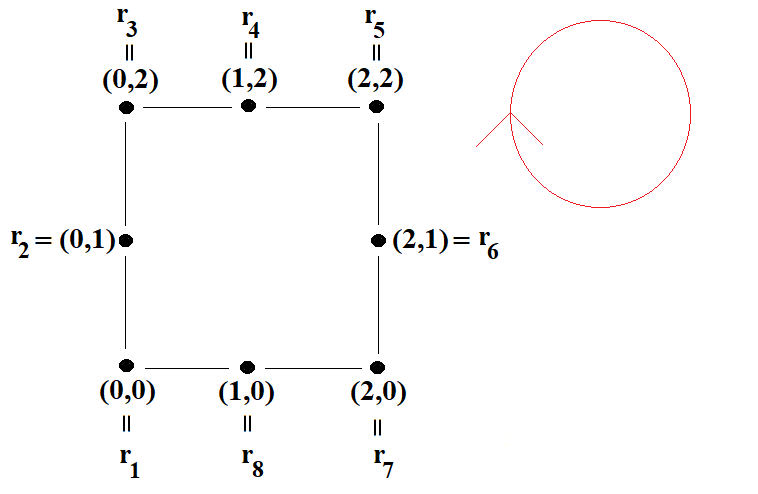}
	\caption{$C_{8}$ with the order of points in it}
	\label{fig5:figure5}
\end{figure*} 

\begin{corollary}\label{lemma6}
	Let $C_{m}$ be a nonempty and $\kappa-$connected digital simple closed curve. Then TC$_{3}(C_{m},4) = 2$ for $m \geq 8$ and TC$_{3}(C_{m},8) = 2$ for $m \geq 6$.
\end{corollary}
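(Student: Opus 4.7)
The plan is to extend the partition-and-section argument of Lemmas \ref{lemma4} and \ref{lemma5} from the base cases $(m,\kappa) = (6,8)$ and $(8,4)$ to arbitrary $m$ in the stated ranges. Label $C_m = \{p_1, \ldots, p_m\}$ cyclically with $p_i \leftrightarrow_\kappa p_{i+1}$ (indices mod $m$), and use the digital fibrational substitute $e_3' : S_3(X) \to X^3$ of the diagonal $d_3 : X \to X^3$ supplied by Lemma \ref{Lemma 3}. Following the base cases, I split $X^3 = A_1 \cup A_2$, where $A_1$ consists of triples $(p_i, p_j, p_k)$ whose indices do not decrease from left to right (allowing a single cyclic wraparound $m \to 1$ in each consecutive pair) and $A_2$ is the complement. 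I then build digitally continuous sections $s_\ell : A_\ell \to S_3(X)$ of $e_3'$ by sending $(p_i, p_j, p_k)$ to $(f, p_i, p_j, p_k)$, with $f$ a digital path in $C_m$ whose image contains all three points, obtained by traversing $C_m$ in the forward cyclic direction for $A_1$ and in the complementary direction for $A_2$. Checking $e_3' \circ s_\ell = \mathrm{id}_{A_\ell}$ is immediate from the definition of $e_3'$, so TC$_3(C_m,\kappa) \leq 2$.

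For the matching lower bound TC$_3(C_m,\kappa) \geq 2$, I use that $C_m$ is irreducible for $m \geq 5$ by Proposition \ref{proposition 5} and therefore not $\kappa$-contractible. The analog of Proposition \ref{proposition 1} for TC$_n$ rules out TC$_3(C_m,\kappa) = 1$: a single global section of $e_3'$ would force $X$ to be digitally contractible via the same retraction-plus-homotopy argument used in the $n=2$ case. Combining the two bounds yields the claimed equality.

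The main obstacle is the digital continuity of $s_1$ and $s_2$ under the adjacency on $S_3(X) \subseteq X^{[0,m]_{\mathbb{Z}}} \times X^3$ recorded in the Remark following Lemma \ref{Lemma 3}. This reduces to showing that when one of $p_i, p_j, p_k$ is replaced by a $\kappa$-neighbor, the assigned path $f$ changes to a $\lambda$-adjacent path, and that the partition boundary between $A_1$ and $A_2$ behaves compatibly. Because the prescription is purely cyclic and does not depend on the specific value of $m$, the local verification that underlies Lemmas \ref{lemma4} and \ref{lemma5} applies verbatim for all larger $m$, handling both $\kappa = 4$ with $m \geq 8$ and $\kappa = 8$ with $m \geq 6$ uniformly.
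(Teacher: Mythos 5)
Your proposal follows essentially the same route as the paper: the paper's own proof is a two-line remark stating that the result is obtained by generalizing Lemmas \ref{lemma4} and \ref{lemma5} via the cyclic ordering of the points of $C_{m}$, which is exactly the partition of $X^{3}$ into order-respecting triples $A_{1}$ and their complement $A_{2}$ with sections of $e_{3}'$ that you carry out. You are in fact more explicit than the paper, both about the continuity verification for the sections and about the lower bound TC$_{3}(C_{m},\kappa) \geq 2$ from irreducibility, which the paper leaves implicit.
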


\begin{proof}
	The proof is a generalization of Lemma \ref{lemma4} and Lemma \ref{lemma5}. The order of points in $C_{m}$ can be easily constructed for all cases. 
\end{proof}

\quad Corollary \ref{lemma6} can be improved for $n > 3$ and TC$_{n}$ gives the same result with TC$_{3}$ for irreducible digital images:

\begin{theorem}
	Let $C_{m}$ be a nonempty and $\kappa-$connected digital simple closed curve and $n > 2$ be a positive integer. Then 
	\begin{itemize}
		\item TC$_{n}(C_{m},4) = 2$, for $m \geq 8$,
		
		\item TC$_{n}(C_{m},8) = 2$, for $m \geq 6$.
	\end{itemize}	
\end{theorem}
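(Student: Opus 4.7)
The plan is to extend the two-piece decomposition used for $n = 3$ in Lemmas \ref{lemma4} and \ref{lemma5} (and unified in Corollary \ref{lemma6}) to arbitrary $n > 2$, and to establish both bounds $\mathrm{TC}_n(C_m,\kappa) \geq 2$ and $\mathrm{TC}_n(C_m,\kappa) \leq 2$ via the digital fibrational substitute $e_n'$ provided by Lemma \ref{Lemma 3}.

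For the lower bound, I would apply Proposition \ref{proposition 5}: $C_m$ is irreducible for every $m \geq 5$, so it is not digitally homotopy equivalent to a one-point image. The paragraph preceding this theorem establishes $\mathrm{TC}_n = 1$ on any one-point image, and the digital homotopy invariance of $\mathrm{TC}_n$ together with an argument analogous to Proposition \ref{proposition 1} (restricting a global section of $e_n'$ to tuples of the form $(x, x_0, \ldots, x_0)$ yields a digital contraction of $C_m$ to $x_0$) shows that $\mathrm{TC}_n(C_m, \kappa) = 1$ would force contractibility, contradicting irreducibility.

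For the upper bound, fix the cyclic enumeration $p_1, p_2, \ldots, p_m$ of $C_m$ exactly as in Lemmas \ref{lemma4} and \ref{lemma5}. Partition $C_m^n = A_1 \cup A_2$, where $A_1$ is the set of tuples $(p_{i_1}, \ldots, p_{i_n})$ whose consecutive indices either satisfy $i_\ell \leq i_{\ell+1}$ or wrap cyclically from $m$ to $1$, and $A_2$ is the complement. Define $s_1 : A_1 \to S_n(C_m)$ by sending a tuple to the pair $(f, p_{i_1}, \ldots, p_{i_n})$, where $f$ is the canonical digital path threading through those points in the forward cyclic direction; define $s_2 : A_2 \to S_n(C_m)$ by the symmetric construction in the reverse direction, again decorated with the given coordinates. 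By construction $e_n' \circ s_j = \mathrm{id}_{A_j}$ for $j = 1, 2$, so by Lemma \ref{Lemma 3} the Schwarz genus of the diagonal fibration is at most $2$, giving $\mathrm{TC}_n(C_m,\kappa) \leq 2$.

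The principal obstacle will be verifying the digital continuity of $s_1$ and $s_2$ under the adjacency on $S_n(C_m)$ described in the Remark following Lemma \ref{Lemma 3}: adjacent tuples in $C_m^n$, which differ in one coordinate by a single cyclic step, must be mapped to adjacent data in $S_n(C_m)$, which requires the underlying canonical paths to differ by only a bounded local perturbation compatible with the path-adjacency $\lambda$. A secondary point is confirming that $A_1 \cup A_2$ is exhaustive for arbitrary $n$ and that the chosen routing on $A_2$ transitions continuously across boundary tuples with multiple descents. Both of these issues are resolved for $n = 3$ in Lemmas \ref{lemma4} and \ref{lemma5} by direct index analysis, and the same bookkeeping extends to $n > 3$ without structural change, which is why the statement's short author proof is expected to read simply that these constructions generalize.
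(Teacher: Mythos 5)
Your proposal follows essentially the same route as the paper: the upper bound comes from the identical two-piece decomposition of $C_m^n$ into order-respecting tuples $A_1$ and their complement $A_2$, with sections of the fibrational substitute $e_n'$ from Lemma \ref{Lemma 3}, exactly as the authors generalize Lemma \ref{lemma4}. You are in fact slightly more complete than the paper, which simply asserts the genus equals $2$ without the explicit lower-bound argument via irreducibility, and which additionally records the minor bookkeeping step of padding the path by repeating its endpoint when $m < n$.
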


\begin{proof}
	Let $m \geq n$. Let $p_{1}, ..., p_{m}$ be points of $C_{m}$, where $p_{1} < p_{2} < ... < p_{m}$. By using the order, a digital path can be obtained by taking $n$ or less (staying on the same point more than once) of $m$ points. Then the method of Lemma \ref{lemma4} works for this case. Let $m < n$ and $(f, p_{1}, p_{2}, ..., p_{n}) \in S_{n}(X)$. In this case, it is necessary to increase the number of steps of the digital path to be able to have an $n-$step path created with $m$ points. A new $n-$step path is obtained by adding the endpoint of any $m-$step path $f$ to the end of the path $m-n$ times. Since we have $n-$step path, we use its $n$ points in the definition of $S_{n}(X)$. After that, we divide $X^{n}$ into two parts $A_{1}$ and $A_{2}$ again: $n$ points of the digital image in which following the order and not, respectively. Thus, we conclude that the digital Schwarz genus of $e_{n}^{'}$ is $2$.  
\end{proof}

\section{Conclusion}
\label{sec:3}
\quad The aim of this paper is to characterize the digital topological complexity of digitally connected two dimensional finite digital images entirely. We first deal with simple closed curves among digital images because they are irreducible. After giving the results about digital simple closed curves, we examine the topological complexity and the higher topological complexity of all possible digitally connected finite digital images in $\mathbb{Z}$ and $\mathbb{Z}^{2}$.

\quad One of the open problems on this topic is to apply our works on $3-$dimensional digital images. As the number of points that a digital image has in three-dimensional space extremely increases, it is not easy to categorize the topological complexities of these points. Before solving this problem, it is more convenient that trying to categorize the digital images up to digital homotopy equivalence, because of the fact that the topological complexity (and the higher topological complexity) is a homotopy invariant for digital images. Moreover, one can observe the results about the topological complexities of reducible or irreducible images in $\mathbb{Z}^{3}$. This leads us to think more about the characterize digital images up to the digital homotopy equivalence in any dimension of digital topology.

\acknowledgment{The first author is granted as fellowship by the Scientific and Technological Research Council of Turkey TUBITAK-2211-A. In addition, this work was partially supported by Research Fund of the Ege University (Project Number: FDK-2020-21123)}


\begin{thebibliography}{99}

\bibitem{BorVer:2018} Borat A., Vergili T., Digital lusternik-schnirelmann category, Turkish Journal of Mathematics, 42, 1845-1852 (2018).

\bibitem{Boxer:1999} Boxer L., A classical construction for the digital fundamental group, Journal of Mathematical Imaging and Vision, 10, 51-62 (1999).

\bibitem{Boxer:2005} Boxer L., Properties of digital homotopy, Journal of Mathematical Imaging and Vision, 22, 19-26 (2005).

\bibitem{Boxer:2006} Boxer L., Homotopy properties of sphere-like digital images, Journal of Mathematical Imaging and Vision, 24, 167-175 (2006).

\bibitem{Boxer2:2006} Boxer L., Digital products, wedges, and covering spaces, Journal of Mathematical Imaging and Vision, 25, 169-171 (2006).

\bibitem{BoxKar:2012} Boxer L., Karaca, I., Fundemental groups for digital products, Advances and Applications in Mathematical Sciences, 11(4), 161-180 (2012).

\bibitem{BoxStaecker:2020} Boxer L., Staecker, PC., Fixed point sets in digital topology, 1, Applied General Topology, 21(1), 87-110 (2020).

\bibitem{Boxer:2020} Boxer L., Fixed point sets in digital topology, 2, Applied General Topology, 21(1), 111-133 (2020).

\bibitem{Chen:2004} Chen L., Discrete surfaces and manifolds: A theory of digital-
discrete geometry and topology, Scientific \& Practical Computing, Rockville, MD (2004).

\bibitem{ChenRong:2010} Chen L., Rong Y., Digital topological method for computing genus and the Betti numbers, Topology and its Applications, 157(12), 1931-1936 (2010).

\bibitem{ÇınarEgeKaraca:2020} \c{C}{\i}nar I., Ege O., Karaca I., The digital smash product, Electronic Research Archive, 28, 459-469 (2020).

\bibitem{EgeKaraca:2013} Ege O., Karaca I., Cohomology theory for digital images, Romanian Journal of Information Science and Technology, 16(1), 10-28 (2013).

\bibitem{EgeKaracaMeltem:2014} Ege O., Karaca I., Erden Ege, M., Relative homology groups in
digital images, Applied Mathematics and Information Sciences, 8(5), 2337-2345 (2014).

\bibitem{Farber:2003} Farber M., Topological complexity of motion planning, Discrete and Computational Geometry, 29, 211-221 (2003).

\bibitem{Farber:2008} Farber M., Invitation to topological robotics, Zur. Lect. Adv. Math., EMS (2008).

\bibitem{HaarMurphyPetStaecker:2015} Haarman J., Murphy MP., Peters CS., Staecker PC., Homotopy equivalence in finite digital images, Journal of Mathematical Imaging and
Vision, 53, 288-302 (2015).

\bibitem{Herman:1993} Herman GT., Oriented surfaces in digital spaces, CVGIP: Graphical models and image processing, 55, 381-396 (1993).

\bibitem{MelihKaraca} Is M., Karaca I., The higher topological complexity in digital images to appear in Applied General Topology (2020).

\bibitem{KaracaIs:2018} Karaca I., Is M., Digital topological complexity numbers, Turkish Journal of Mathematics, 42(6), 3173-3181 (2018).

\bibitem{Khal:1987} E. Khalimsky, Motion, deformation, and homotopy in finite spaces. Proceedings IEEE International Conference on Sytems, Man, and Sybernetics (1987), 227-234.

\bibitem{MorgenRosen:1981} Morgenthaler DG., Rosenfeld A., Surfaces in three-dimensional images, Informational Control, 51, 227-247 (1981).

\bibitem{Ros:1970} Rosenfeld A., Connectivity in digital pictures, Journal of the ACM, 17, 146-160 (1970).

\bibitem{Ros:1979} Rosenfeld A., Digital topology, American Mathematical Monthly, 86, 76-87 (1979).

\bibitem{Rudyak:2010} Rudyak Y., On higher analogs of topological complexity, Topology and Its Applications, 157(5), 916-920 (2010).

\end{thebibliography}
\end{document}